\newcommand{\mC}{\ensuremath{\mathbb{C}}}
\newcommand{\mD}{\ensuremath{\mathbb{D}}}
\newcommand{\mN}{\ensuremath{\mathbb{N}}}
\newcommand{\cM}{\ensuremath{\mathcal{M}}}
\newcommand{\cF}{\ensuremath{\mathcal{F}}}
\newcommand{\cH}{\ensuremath{\mathcal{H}}}
\newtheorem{thm}{\bf Theorem}
\newtheorem{theorem}{Theorem}[section]
\newtheorem{lemma}[theorem]{Lemma}
\newtheorem{question}[theorem]{Question}
\newtheorem{corollary}[theorem]{Corollary}
\theoremstyle{definition}
\newtheorem{definition}[theorem]{Definition}
\newtheorem{example}[theorem]{Example}
\theoremstyle{remark}
\numberwithin{equation}{section}
\begin{document}

\title[A Note on Normal Families...]{A Note on Normal Families Concerning Nonexceptional Functions and Differential Polynomials}

\author[N. Bharti]{Nikhil Bharti}
\address{
\begin{tabular}{lll}
&Nikhil Bharti\\
&Department of Mathematics\\
&University of Jammu\\
&Jammu-180 006, INDIA\\
&{\em Orcid id}: 0000-0003-2501-6247\\ 
\end{tabular}}
\email{nikhilbharti94@gmail.com}

\author[A. Singh]{Anil Singh}
\address{
\begin{tabular}{lll}
&Anil Singh\\
&Department of Mathematics\\
&Maulana Azad Memorial College\\
&Jammu-180 006, INDIA\\ 
&{\em Orcid id}: 0000-0002-3447-8585
\end{tabular}}
\email{anilmanhasfeb90@gmail.com}

\begin{abstract}
We study normality of a family of  meromorphic functions, whose differential polynomials satisfy a certain condition, which significantly improves and generalizes some recent results of Chen (Filomat, 31(14) 2017, 4665-4671).  Moreover, we demonstrate, with the help of examples, that the result is sharp.
\end{abstract}

\renewcommand{\thefootnote}{\fnsymbol{footnote}}
\footnotetext{2010 {\it Mathematics Subject Classification}. Primary 30D45; Secondary 30D30, 30D35, 34M05.}
\footnotetext{{\it Keywords and phrases}. Normal families, differential polynomials, meromorphic functions.}

\maketitle

\section{Introduction and main results}
A family $\mathcal{F}$ of meromorphic functions in a domain $D\subseteq\mC$ is said to be {\it normal} in $D$ if from each sequence of functions in $\mathcal{F},$ one can extract a subsequence which converges locally uniformly in $D$ with respect to the spherical metric to the limit function which is either meromorphic in $D$ or the constant $\infty.$ A family $\cF$ is normal at a point $z_0\in D$ if and only if it is normal in some neighbourhood of $z_0$ (see \cite{schiff, zalcman}). The theory of normal families plays an important role in complex analysis and has been used extensively to study the Riemann mapping theorem, dynamical systems, conformal mappings and many other interesting topics.

We denote by  $\mathcal{H}(D)$ (respectively, $\mathcal{M}(D)$), the class of all holomorphic (respectively, meromorphic) functions on  domain $D\subseteq\mC,$ and $\mathbb{D}$ shall denote the open unit disk in $\mathbb{C}.$

 Let $f,~g\in\cM(D).$ Then $g$ is said to be an exceptional function of $f$ in $D$ if $f(z)-g(z)\neq 0$ in $D,$ otherwise, we call $g$ a non-exceptional function of $f$ in $D.$ In particular, if $g\equiv c,$ where $c\in\mC,$ then $c$ is said to be an exceptional (respectively, non-exceptional) value of $f$ in $D.$ Our objective in this paper is to study the normality of a family of non-vanishing meromorphic functions in a domain $D\subseteq\mC$ whose differential polynomials have a non-exceptional meromorphic function in $D.$ 
\medskip

\begin{definition}\cite{grahl}
Let $k\in\mN$ and $n_0, n_1,\ldots, n_k$ be non-negative integers,  not all zero. Let $f\in\mathcal{M}(D).$ Then the product $$M[f]:=a\cdot\prod\limits_{j=0}^{k}\left(f^{(j)}\right)^{n_j}$$ is called a differential monomial of $f,$ where $a~(\not\equiv 0)\in\mathcal{M}(D).$ If $a\equiv 1,$ then $M[f]$ is said to be a normalized differential monomial of $f.$ The quantities $$d:=\sum\limits_{j=0}^{k}n_j \mbox{ and } w:=\sum\limits_{j=0}^{k}(j+1)n_j$$ are called the degree and weight of the differential monomial $M[f],$ respectively. 

For $1\leq i\leq m,$ let $M_i[f]$  be $m$ normalized differential monomials of $f$ having degree $d_i$ and weight $w_i.$ Then the sum $$P[f]:= \sum\limits_{i=1}^{m}a_iM_i[f]$$ is called a differential polynomial of $f$ and the quantities $$d:=\max\left\{d_i: 1\leq i\leq m\right\};~ d_0:=\min\left\{d_i: 1\leq i\leq m\right\} \mbox{ and } w:=\max\left\{w_i: 1\leq i\leq m\right\}$$ are called the degree, the lower degree and the weight of the differential polynomial $P[f],$ respectively.  If $d_1=d_2=\cdots=d_m,$ then $P[f]$ is said to be a homogeneous differential polynomial.
\end{definition}

\medskip

In the present paper, we consider the differential polynomials of the form 

\begin{equation}\label{eqn:3}
Q[f]:=f^{p_0}(f^{p_1})^{(q_1)}(f^{p_2})^{(q_2)}\cdots(f^{p_k})^{(q_k)},
\end{equation}
where $p_0,~p_1,~\ldots,~p_k,~q_1,~q_2,~\ldots,~q_k$ are non-negative integers. 

Set $p'=\sum\limits_{i=1}^{k} p_i$ and  $q'=\sum\limits_{i=1}^{k}q_i.$ Further, we assume that $p_i\geq q_i$ for $i=1,~2,~\ldots,~k$ and $q'>0.$ Through Leibniz rule, one can see that $$(f^{p_i})^{(q_i)}=\sum\limits_{n_1+n_2+\cdots+n_{p_i}=q_i} \frac{q_i!}{n_1!n_2!\cdots n_{p_i}!}f^{(n_1)}f^{(n_2)}\cdots f^{(n_{p_i})},$$ where $n_i$'s are non-negative integers. Thus the degree of $Q[f],$ $d=p_0+p'$ and the weight of $Q[f]$, $w=p_0+p'+q'=d+q'.$\\
In literature, the differential polynomial $Q[f]$ was first considered by Dethloff et al in \cite{dethloff} and since then, it has grasped interest of many authors, see \cite{charak, thin-1, thin-2, thin-3}. However, in all the previous works, there is an additional condition on $Q[f],$ namely that $p_0\neq 0.$ We emphasize that this condition is not required in our results. 

\medskip

In \cite{deng}, Deng et al. obtained the following normality criterion:

\begin{thm}\cite[Theorem 5]{deng}\label{thm:deng}
Let $\mathcal{F}\subset\mathcal{M}(D)$ be a family of non-vanishing functions, let $h\in\mathcal{H}(D)$ be such that $h\not\equiv 0,$ and let $k$ be a positive integer. If, for every $f\in\mathcal{F},$ $f^{(k)}-h$ has at most $k$ distinct zeros, ignoring multiplicities in $D,$ then $\mathcal{F}$ is normal in $D.$
\end{thm}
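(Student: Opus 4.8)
The plan is to argue by contradiction via the Pang--Zalcman rescaling lemma, converting the non-normality of $\mathcal{F}$ into a value-distribution statement about a single meromorphic function on $\mathbb{C}$. Since normality is a local property and the zeros of $h$ are isolated, I would first treat an arbitrary point $z_0\in D$ with $h(z_0)\neq 0$ and handle the (discrete) zero set of $h$ separately afterwards.

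Suppose $\mathcal{F}$ is not normal at such a $z_0$. Because every $f\in\mathcal{F}$ is non-vanishing, the multiplicity hypothesis in the Pang--Zalcman lemma is vacuous, so I may apply it with exponent $k$: there exist $f_n\in\mathcal{F}$, points $z_n\to z_0$ and scales $\rho_n\to 0^{+}$ such that
$$g_n(\zeta):=\rho_n^{-k}f_n(z_n+\rho_n\zeta)\longrightarrow g(\zeta)$$
locally uniformly on $\mathbb{C}$ in the spherical metric, with $g$ a non-constant meromorphic function of finite order. Two facts follow at once. First, each $g_n$ is non-vanishing, so Hurwitz's theorem forces $g$ to be non-vanishing (it is non-constant, hence $g\not\equiv 0$). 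Second, differentiating $k$ times gives $g_n^{(k)}(\zeta)=f_n^{(k)}(z_n+\rho_n\zeta)$, whence $f_n^{(k)}(z_n+\rho_n\zeta)-h(z_n+\rho_n\zeta)\to g^{(k)}(\zeta)-c$ uniformly on compacta, where $c:=h(z_0)\neq 0$.

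Next I would transfer the zero count to the limit. If $g^{(k)}-c$ had $N$ distinct zeros, then choosing small disjoint disks about them and applying Hurwitz to $g_n^{(k)}-h(z_n+\rho_n\,\cdot\,)$ produces, for all large $n$, at least $N$ distinct zeros of $f_n^{(k)}-h$ inside $D$; hence $N\le k$. (Here $g^{(k)}-c\not\equiv 0$, since otherwise $g$ would be a polynomial of degree at most $k$, and a non-vanishing polynomial is constant.) The contradiction I am after is therefore the claim that a \emph{non-constant, non-vanishing} meromorphic $g$ on $\mathbb{C}$ cannot have $g^{(k)}-c$ with at most $k$ distinct zeros when $c\neq 0$. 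For transcendental $g$ this is precisely Hayman's theorem: a non-vanishing transcendental meromorphic function has $g^{(k)}$ assuming every nonzero value infinitely often, so $g^{(k)}-c$ has infinitely many distinct zeros. For rational $g$, non-vanishing forces $g=A/Q$ with $A$ a nonzero constant and $\deg Q=m\ge 1$; writing $t$ for the number of distinct poles, a direct count (the zeros of $g^{(k)}-c$ number $m+tk$ with multiplicity, while the multiple ones are absorbed by the $(t-1)(k+1)$ finite zeros of $g^{(k+1)}$) gives at least $m-t+k+1\ge k+1$ distinct zeros. In either case $N\ge k+1$, the desired contradiction, so $\mathcal{F}$ is normal at every $z_0$ with $h(z_0)\neq 0$.

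The hard part, where I expect the real work to lie, is the finitely many points $z_0$ with $h(z_0)=0$. There the transferred value is $c=0$ and the argument above collapses: Hayman's alternative says nothing about the value $0$, and indeed $g=e^{\zeta}$ is non-vanishing, non-constant, with $g^{(k)}$ nowhere zero, so rescaling with exponent $k$ alone cannot close the case. My strategy is to use the normality already established on $D\setminus h^{-1}(0)$ and work locally across each zero $z_0$ of $h$. Fixing a punctured disk $\Delta^{\ast}$ on which $\mathcal{F}$ is normal, I would pass to a subsequence $f_n\to\phi$ on $\Delta^{\ast}$. If $\phi\equiv\infty$, then $1/f_n$ is holomorphic on the full disk and tends to $0$ on a surrounding circle, so by the maximum principle $f_n\to\infty$ throughout, giving normality at $z_0$. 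The genuinely delicate alternative is that $f_n$ develops poles clustering at $z_0$ (the model being $f_n=\varepsilon_n/(z-z_0)$): here I would show the hypothesis is violated, since near a zero of $h$ of order $\ell$ the equation $f_n^{(k)}=h$ forces on the order of $\ell+k+1>k$ distinct solutions accumulating at $z_0$. Making this count uniform and rigorous — controlling the distinct zeros of $f_n^{(k)}-h$ near $z_0$ independently of $n$ — is the main obstacle, and is the step that genuinely uses the non-vanishing of the family together with the zero-counting bound.
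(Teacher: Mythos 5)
First, a point of orientation: the paper never proves Theorem \ref{thm:deng}; it is quoted from \cite{deng}, and within the paper it only arises as the special case $Q[f]=f^{(k)}$ (so $d=1$, $w=k+1$, $w-1=k$) of Theorem \ref{thm:nik-anil}, whose proof rests on Lemma \ref{lem:3}, Theorem \ref{thm:7}, and the cited Theorem \ref{thm:bharti}. Measured against that machinery, the first half of your argument --- normality at points where $h(z_0)\neq 0$ --- is correct and is essentially the paper's own method specialized to $f^{(k)}$: Zalcman--Pang rescaling with exponent $\alpha=k$ (admissible for zero-free families by Lemma \ref{lem:zp}), Hurwitz transfer of the zero bound to $g^{(k)}-c$, Hayman's alternative in the transcendental case (playing the role of Lemma \ref{lem:nik}, and correctly avoiding its multiple-pole hypothesis, which Theorem \ref{thm:deng} does not provide), and the rational count $m+tk-(t-1)(k+1)=m-t+k+1\geq k+1$, which is exactly the $f^{(k)}$ instance of Lemma \ref{lem:2}. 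That part I would accept as written, modulo the routine remark that $g_n^{(k)}\to g^{(k)}$ only on compacta avoiding the poles of $g$, which is harmless since the zeros of $g^{(k)}-c$ are not poles of $g$.

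The genuine gap is the case $h(z_0)=0$, and it is not a detail: your final paragraph is a statement of what must be proved, not a proof. The assertion that near a zero of $h$ of order $\ell$ the hypothesis forces more than $k$ distinct zeros of $f_n^{(k)}-h$ accumulating at $z_0$ is precisely the hard claim, and the punctured-disk setup you describe does not produce it. (A smaller issue: your dichotomy on $\Delta^{\ast}$ omits the case that the limit $\phi$ is meromorphic with $\phi\not\equiv 0,\infty$; that case is fine, because the functions $1/f_n$ are holomorphic on the full disk and uniformly Cauchy on circles around $z_0$, hence converge across $z_0$; by Hurwitz the only bad case is $\phi\equiv 0$ with poles of $f_n$ accumulating at $z_0$, which is where your argument stops at an assertion.) The way this case is actually closed, both in \cite{deng} and in the framework the present paper relies on (Theorem \ref{thm:bharti} from \cite{bharti} for zeros of $h$; compare the proof of Theorem \ref{thm:7} for poles of $h$), is by rescaling again, with an exponent adapted to the order of vanishing: write $h(z)=(z-z_0)^{\ell}b(z)$ with $b(z_0)=c\neq 0$, set $g_n(\zeta)=\rho_n^{-(k+\ell)}f_n(z_n+\rho_n\zeta)$, and split into the cases $(z_n-z_0)/\rho_n\to\infty$ (handled via an auxiliary rescaled family and the normality already established off the zeros of $h$, as in Case 1 of the proof of Theorem \ref{thm:7}) and $(z_n-z_0)/\rho_n\to\alpha\in\mC$, where the limit relation becomes $g^{(k)}(\zeta)=c(\zeta+\alpha)^{\ell}$. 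One then needs the value-distribution input that for a non-constant zero-free meromorphic $g$ on $\mC$ the function $g^{(k)}$ minus a nonzero polynomial has at least $k+1$ distinct zeros: for rational $g$ this is literally Lemma \ref{lem:2} (which allows arbitrary polynomial targets $h$), while for transcendental $g$ it requires a small-function version of Hayman's alternative, since your appeal to the classical alternative only covers nonzero constant targets. None of this is in your proposal, so as it stands your proof covers only the case that $h$ is zero-free.
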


The proof of Theorem \ref{thm:deng} when $h\equiv 1$ is due to Chang \cite{chang}. In fact, Theorem \ref{thm:deng} generalized a normality criterion of Gu \cite{gu} who obtained Theorem \ref{thm:deng} under the hypothesis that $f^{(k)}-1$ has no zeros in $D,$ and thus gave an affirmative answer to a question by Hayman (see \cite[Problem 5.11]{hayman-3}). Thin \cite{thin-3} extended Theorem \ref{thm:deng} to differential polynomials when $h\equiv 1.$ Recently, Chen \cite{chen} considered a non-exceptional meromorphic function instead of a non-exceptional holomorphic function in Theorem \ref{thm:deng} and obtained the following normality criterion:

\begin{thm}\cite[Theorem 2]{chen}\label{thm:chen}
Let $\mathcal{F}\subset\mathcal{M}(D)$ be a family of non-vanishing functions, all of whose poles are multiple, and let $h\in\mathcal{M}(D)$ be such that $h\not\equiv 0,~\infty$ and all poles of $h$ are simple. Let $k$ be a positive integer. If, for every $f\in\mathcal{F},$ $f^{(k)}-h$ has at most $k$ distinct zeros, ignoring multiplicities, in $D,$ then $\mathcal{F}$ is normal in $D.$
\end{thm}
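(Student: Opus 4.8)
The plan is to argue by contradiction via the Zalcman--Pang rescaling method, reducing the normality question to a Picard-type statement on the whole plane. Since normality is local, it suffices to prove that $\cF$ is normal at each $z_0\in D$. Because $h\not\equiv 0,\infty$, the set $E$ of zeros and poles of $h$ is discrete in $D$, and I would treat the points of $D\setminus E$ and the points of $E$ by completely different mechanisms.

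First I would handle a point $z_0\in D\setminus E$, so that $c:=h(z_0)\in\mC\setminus\{0\}$. Assuming $\cF$ is not normal at $z_0$, I would apply the Zalcman--Pang lemma with exponent $\alpha=k$; this is admissible because the functions of $\cF$ have no zeros and all their poles have multiplicity at least $2$, so the required range $-2<\alpha$ holds. This yields $f_n\in\cF$, $z_n\to z_0$ and $\rho_n\to 0^+$ with $g_n(\zeta):=\rho_n^{-k}f_n(z_n+\rho_n\zeta)\to g(\zeta)$ locally uniformly, $g$ non-constant meromorphic on $\mC$. Differentiating $k$ times gives $g_n^{(k)}(\zeta)=f_n^{(k)}(z_n+\rho_n\zeta)$, and since $h$ is holomorphic and nonzero near $z_0$ we have $h(z_n+\rho_n\zeta)\to c$ locally uniformly, whence $g_n^{(k)}(\zeta)-h(z_n+\rho_n\zeta)\to g^{(k)}(\zeta)-c$. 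Hurwitz's theorem transfers the hypotheses to the limit: $g\neq 0$ (from the non-vanishing of the $g_n$), all poles of $g$ have multiplicity at least $2$ (apply Hurwitz to $1/g_n$), and $g^{(k)}-c$ has at most $k$ distinct zeros (otherwise $g_n^{(k)}-h(z_n+\rho_n\cdot)$ would have more than $k$ distinct zeros for large $n$); moreover $g^{(k)}\not\equiv c$, since a non-vanishing $g$ with $g^{(k)}\equiv c\neq 0$ would be a degree-$k$ polynomial and hence vanish.

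The heart of the proof, which I expect to be the main obstacle, is the resulting Picard-type lemma: no non-constant meromorphic $g$ on $\mC$ exists with $g\neq 0$, all poles of multiplicity at least $2$, and $g^{(k)}-c$ $(c\neq 0)$ having at most $k$ distinct zeros. For transcendental $g$ I would invoke Milloux's inequality (discarding the nonpositive term $-N(r,1/g^{(k+1)})$),
\[
T(r,g)\le \overline{N}(r,g)+N\!\left(r,\tfrac1g\right)+\overline{N}\!\left(r,\tfrac{1}{g^{(k)}-c}\right)+S(r,g).
\]
Here $N(r,1/g)=0$ as $g$ omits $0$; $\overline{N}(r,1/(g^{(k)}-c))=O(\log r)=S(r,g)$ since $g^{(k)}-c$ has boundedly many distinct zeros; and $\overline{N}(r,g)\le\tfrac12 N(r,g)\le\tfrac12 T(r,g)$ because the poles are multiple. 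This forces $\tfrac12 T(r,g)\le S(r,g)$, contradicting transcendence. For rational $g$, omission of $0$ forces $g=c_0/Q$ with $\deg Q=d\ge 2s$, where $s$ is the number of distinct (multiple) poles; counting the finite zeros of $g^{(k+1)}$, each distinct zero of $g^{(k)}-c$ of multiplicity $\mu$ contributing $\mu-1$, gives $(d+sk)-k\le (s-1)(k+1)$, hence $d\le s-1$, contradicting $d\ge 2s$. This rules out the limit and shows $\cF$ is normal on $D\setminus E$.

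Finally, for $z_0\in E$ I would avoid rescaling entirely, since there $c\in\{0,\infty\}$ and the lemma above genuinely fails (e.g. $g=e^\zeta$ meets all the constraints when $c=0$). Instead I would exploit that $E$ is discrete, so $\cF$ is already normal on a punctured disk $D_0^\ast$ about $z_0$, together with the non-vanishing hypothesis. Given $f_n\in\cF$, pass to a subsequence converging spherically and locally uniformly on $D_0^\ast$ to some $g_0$. If $g_0\equiv\infty$ or $g_0\equiv 0$, then $1/f_n\to 0$ or $1/f_n\to\infty$ uniformly on a circle about $z_0$, and since each $1/f_n$ is holomorphic on $D_0$, the maximum (respectively minimum) modulus principle propagates this over the whole disk, so the subsequence converges on $D_0$. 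Otherwise $g_0$ is meromorphic and nonvanishing on $D_0^\ast$, so $1/g_0$ is holomorphic, hence bounded, on a circle about $z_0$; the convergence $1/f_n\to 1/g_0$ then makes $\{1/f_n\}$ uniformly bounded on that circle, and the maximum modulus principle makes it uniformly bounded on the enclosed disk. By Montel's theorem $\{1/f_n\}$, and therefore $\{f_n\}$, is normal at $z_0$. Hence $\cF$ is normal at every point of $E$ as well, and the proof is complete.
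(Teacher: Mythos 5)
Your treatment of points where $h$ is finite and nonzero is correct: the Zalcman--Pang rescaling with $\alpha=k$ (admissible since the $f_n$ are zero-free), the Hurwitz transfer of the hypotheses to the limit, and the Picard-type lemma --- Milloux's inequality with $N(r,1/g)=0$, $\overline{N}(r,g)\le\tfrac12 T(r,g)$ in the transcendental case, and the count $(d+sk)-k\le(s-1)(k+1)$ forcing $d\le s-1<2s$ in the rational case --- are all sound, and this part runs parallel to the paper's Lemma \ref{lem:3}. The fatal gap is your treatment of $z_0\in E$, in the case $g_0\equiv 0$: you invoke a ``minimum modulus principle'' for $1/f_n$, but $1/f_n$ is holomorphic and \emph{not} zero-free on $D_0$ --- it vanishes exactly at the poles of $f_n$ --- so no such principle applies. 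Concretely, take $f_n(z)=1/(nz^2)$ (the family appearing in the paper's Examples 1.5 and \ref{exp:d3}): it is non-vanishing with only multiple poles, converges to $0$ locally uniformly on the punctured disk, and $1/f_n(z)=nz^2\to\infty$ uniformly on every circle about $0$, yet $\{f_n\}$ is not normal at $0$ because $f_n(0)=\infty$. Thus the statement your argument would establish --- ``non-vanishing $+$ normal on the punctured disk $\Rightarrow$ normal at the center'' --- is simply false, and this is precisely the failure mode exhibited by the paper's counterexamples.

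The underlying problem is that your argument at points of $E$ never uses the hypothesis that $f^{(k)}-h$ has at most $k$ distinct zeros, nor that the poles of $h$ are simple; but the paper's Example 1.5 (where $h$ has a pole of order $3$) shows normality can genuinely fail at a pole of $h$ once those hypotheses are relaxed, so they must enter the argument exactly there. That is where all the real work lies, both in Chen's proof and in this paper: at a pole of $h$, the paper (proof of Theorem \ref{thm:7}) performs a \emph{second} rescaling with the shifted exponent $(w-d-1)/d$ --- for $Q[f]=f^{(k)}$ this is $k-1$ rather than $k$, the $-1$ encoding the simple pole of $h$ --- then splits into the cases $z_j/\rho_j\to\infty$ and $z_j/\rho_j\to\alpha\in\mC$, and in the latter case must rule out limits $g$ for which $Q[g](\zeta)-1/(\zeta+\alpha)$ has few zeros, which requires a separate Picard-type lemma for the moving target $1/(\zeta+\alpha)$ (Lemma \ref{lem:nikhil}), not for a nonzero constant; the zeros of $h$ are handled by yet another result (Theorem \ref{thm:bharti}). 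Your constant-target lemma cannot serve as a substitute: as you note yourself it fails for $c=0$, and at a pole of $h$ the natural rescaled target is $1/(\zeta+\alpha)$, not a constant. So the proposal is incomplete at every point of $E$, which is the heart of the theorem.
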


\medskip

Since a differential polynomial is an extension of a derivative, a natural question about Theorem \ref{thm:chen} arises:

\begin{question}\label{q:1}
Does Theorem \ref{thm:chen} remain valid if the $k$-th derivative $f^{(k)}$ is replaced by the differential polynomial $Q[f]?$ 
\end{question}

We intend to give an affirmative answer to Question \ref{q:1}. For this, we first establish the following theorem:

\begin{theorem}\label{thm:7}
Let $\cF\subset\mathcal{M}(D)$ be a family of non-vanishing functions, all of whose poles are multiple and let $h\in\mathcal{M}(D)$ be such that $h(z)\neq 0$ in $D,$ $h\not\equiv\infty,$ and poles of $h$ are simple. If, for each $f\in\cF,$ $Q[f]-h$ has at most $w-1$ zeros, ignoring multiplicities, in $D,$ then $\cF$ is normal in $D.$ 
\end{theorem}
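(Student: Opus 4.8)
The plan is to argue by contradiction using the rescaling method of Zalcman and Pang. Since normality is a local property, it suffices to show that $\cF$ is normal at each point $z_0\in D$, and I would treat separately the case where $h(z_0)\neq 0,\infty$ and the case where $z_0$ is a (simple) pole of $h$. Consider first a point $z_0$ with $h(z_0)=:a\in\mC\setminus\{0\}$, and suppose $\cF$ is not normal at $z_0$. Recalling $0<q'\le p'\le d$, set $\alpha:=q'/d\in(0,1]$. By the Pang--Zalcman lemma (applicable since this $\alpha$ lies in the admissible range for a family of non-vanishing functions with only multiple poles) there exist $f_n\in\cF$, points $z_n\to z_0$ and radii $\rho_n\to 0^+$ such that
\begin{equation*}
 g_n(\zeta):=\rho_n^{-\alpha}f_n(z_n+\rho_n\zeta)\longrightarrow g(\zeta)
\end{equation*}
spherically locally uniformly on $\mC$, where $g$ is nonconstant and meromorphic.

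Because each $f_n$ is non-vanishing and each pole of $f_n$ is multiple, Hurwitz's theorem (applied to $g_n$ and to $1/g_n$) shows that $g$ has no zeros and that all its poles are multiple. From $g_n^{(j)}(\zeta)=\rho_n^{\,j-\alpha}f_n^{(j)}(z_n+\rho_n\zeta)$ one computes $Q[g_n](\zeta)=\rho_n^{\,q'-\alpha d}Q[f_n](z_n+\rho_n\zeta)=Q[f_n](z_n+\rho_n\zeta)$, since $q'-\alpha d=0$ by the choice of $\alpha$; hence $Q[g_n]\to Q[g]$ locally uniformly off the poles of $g$. As $z_n+\rho_n\zeta\to z_0$ we have $h(z_n+\rho_n\zeta)\to a$, so
\begin{equation*}
 Q[g_n](\zeta)-h(z_n+\rho_n\zeta)=Q[f_n](z_n+\rho_n\zeta)-h(z_n+\rho_n\zeta)\longrightarrow Q[g](\zeta)-a.
\end{equation*}
Since $Q[g]\not\equiv a$ (otherwise $g$ would be forced to be constant), $Q[g]-a\not\equiv 0$; as $Q[f_n]-h$ has at most $w-1$ distinct zeros in $D$, the displayed convergence together with Hurwitz's theorem yields that $Q[g]-a$ has at most $w-1$ distinct zeros in $\mC$.

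The heart of the argument, and the step I expect to be the main obstacle, is to contradict this by proving that $Q[g]-a$ must have at least $w$ distinct zeros whenever $g$ is a nonconstant meromorphic function on $\mC$ with no zeros and only multiple poles. I would split this into two cases. If $g$ is transcendental, a Hayman/Milloux-type application of the second main theorem to the differential polynomial $Q[g]$ (using $q'>0$, $N(r,1/g)=0$, and $\overline N(r,g)\le\tfrac12 N(r,g)$) shows that $Q[g]$ takes the nonzero value $a$ infinitely often, giving the required contradiction. If $g$ is rational and nonconstant, then $g=c/P$ with $c\neq 0$ and $P$ a polynomial all of whose zeros have multiplicity $\ge 2$; writing $t$ for the number of distinct poles and $s=\deg P$ for the total pole multiplicity (so $s\ge 2t$), a direct computation shows that $Q[g]$ has a pole of order $m_jd+q'$ at each pole $b_j$ of multiplicity $m_j$, a zero of order $sd+q'$ at $\infty$, and total degree $ds+tq'$ as a rational map. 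Bounding the ramification of $Q[g]$ over $a$ by Riemann--Hurwitz (subtracting the ramification over $0$ and $\infty$, which the above data make explicit) then forces the number of distinct solutions of $Q[g]=a$ to be at least $ds+q'-t+1\ge d+q'=w$, again a contradiction. The sharpness of the threshold $w$ in this rational count is exactly the delicate point.

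It remains to treat a point $z_0$ that is a simple pole of $h$. The regular-point case already gives that $\cF$ is normal in a punctured disk $0<|z-z_0|<\delta$. To extend normality across $z_0$, given any sequence in $\cF$ I would pass to a subsequence converging spherically on the punctured disk to some $\phi$; exploiting that each $f$ is non-vanishing, so that each $1/f$ is holomorphic, together with the maximum principle applied to $1/f_n$ on a circle $|z-z_0|=r$ on which the limit is controlled, one extracts a subsequence converging spherically on the full disk, whence $\cF$ is normal at $z_0$ as well. The degenerate possibilities $\phi\equiv 0$ and $\phi\equiv\infty$ are where the simplicity of the pole of $h$ and the zero-counting hypothesis near $z_0$ re-enter; this extension, while technical, is less deep than the value-distribution estimate for $g$ above.
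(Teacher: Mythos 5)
Your treatment of the points where $h$ is finite and nonzero is sound, and it is essentially the paper's Lemma \ref{lem:3}: the same Pang--Zalcman rescaling with exponent $\alpha=q'/d=(w-d)/d$, the same Hurwitz step, and the same transcendental/rational dichotomy (the transcendental half being the paper's Lemma \ref{lem:nik}). Your Riemann--Hurwitz count in the rational case is a legitimate alternative to the paper's Lemma \ref{lem:2}, which instead follows Chang's logarithmic-derivative and Vandermonde-rank method; your version checks out: with $g=c/P$, $\deg P=s\ge 2t$, the map $Q[g]$ has degree $sd+tq'$, ramification at least $sd+q'-1$ over $0$ and exactly $sd+tq'-t$ over $\infty$, leaving at most $(t-1)(q'+1)$ ramification over $a$, hence at least $sd+q'-t+1\ge 2d+q'>w$ distinct $a$-points. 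That half of the argument works and is arguably cleaner than the paper's.

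The genuine gap is at the points where $h$ has a pole, and it is not a removable technicality: it is the entire content of the paper's Section 3. Your extension argument (converge on the punctured disk to $\phi$, then apply the maximum principle to the holomorphic functions $1/f_n$) succeeds only when $\phi\not\equiv 0$. The dangerous case is $\phi\equiv 0$ with poles of $f_n$ accumulating at $z_0$, e.g.\ $f_n(z)=1/(nz^2)$, where $1/f_n=nz^2$ is holomorphic and the maximum principle yields no contradiction whatsoever. This case cannot be dismissed by any soft argument, because it is precisely the mechanism by which normality fails: in the paper's Example \ref{exp:d3}, $f_j=1/(jz^2)$, $Q[f_j]=f_j'$, $h=1/z$, the function $Q[f_j]-h$ has exactly $w$ distinct zeros, and the family is not normal at the pole of $h$. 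So any correct proof must use the bound ``at most $w-1$ zeros'' quantitatively at $z_0$; your sketch never does, saying only that the hypothesis ``re-enters.'' What the paper actually does there is: rescale with the shifted exponent $(w-d-1)/d$ (the extra $-1$ compensating the simple pole of $h$), split according to whether $z_j/\rho_j\to\infty$ (handled by the auxiliary rescaling $F_j(\zeta)=z_j^{-(w-d-1)/d}f_j(z_j+z_j\zeta)$ together with Lemma \ref{lem:3}) or $z_j/\rho_j\to\alpha\in\mC$, in which case the limit relation becomes $Q[g](\zeta)-1/(\zeta+\alpha)$ --- a moving target with a simple pole, not a constant. One then needs a counting lemma for this target (the paper's Lemma \ref{lem:nikhil}): for non-vanishing rational $g$ with only multiple poles, $Q[g]-1/(z+a)$ has at least $w$ distinct zeros. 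Your Riemann--Hurwitz computation was carried out for a constant target $a\ne 0,\infty$ and does not transfer as stated, since the preimage and ramification bookkeeping changes over the pole of the target; it would have to be redone for $1/(z+a)$. In short, the half of the proof you deferred as ``less deep'' is the crux of the theorem, and it is missing.
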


In what follows, we show that various conditions in the hypothesis of Theorem \ref{thm:7} are essential.

\begin{example}
Consider the family $$\mathcal{F}:=\left\{f_j: f_j(z)=\frac{1}{jz},~ z\in\mD,~j\geq 3,~j\in\mN\right\}$$ and let $Q[f_j](z):=\left(f_j^2\right)''=2f_jf_j''+2\left(f_j'\right)^{2}$ so that $w=4.$ Then $\mathcal{F}\subset\mathcal{M}(\mD)$ and each $f_j$ has only simple poles. Also, $Q[f_j](z)=6/j^2z^4.$ Take $h(z)=1/z.$ Then $Q[f_j](z)-h(z)=(6-j^2z^3)/j^2z^4$ has exactly $w-1=3$ distinct zeros in $\mD.$ However, the family $\mathcal{F}$ is not normal in $\mD,$ {\color[rgb]{0,0,1}since $f_j(0)=\infty$ and $f_j(z)\rightarrow 0$ in $\mD\setminus\{0\}.$} This shows that the condition that poles of each $f\in\cF$ are multiple, is necessary. 
\end{example}

\begin{example}
Let $\mathcal{F}:=\left\{f_j: f_j(z)=1/jz^2,~ z\in\mD,~j\geq 2,~j\in\mN\right\}$ and let $Q[f_j](z):=f_jf_j'.$ Then $w=3$ and $Q[f_j](z)=-2/j^2z^5.$ Take $h(z)=1/z^3.$ Then $Q[f_j](z)-h(z)=(-2-j^2z^2)/j^2z^5$ has exactly $w-1=2$ zeros in $\mD,$ ignoring multiplicities. But the family $\mathcal{F}$ is not normal in $\mD,$ {\color[rgb]{0,0,1}since $f_j(0)=\infty$ and $f_j(z)\rightarrow 0$ in $\mD\setminus\{0\}.$} This shows that the condition that poles of $h$ are simple cannot be avoided. 
\end{example}

The following example shows that Theorem \ref{thm:7} fails to hold if functions in $\mathcal{F}$ are permitted to have zeros.

\begin{example}
Let $\mathcal{F}:=\left\{f_j: f_j(z)=j(\tan 2z)^2,~ z\in\mD,~j\in\mN\right\}.$ Then for each $j,$ $f_j$ has multiple zeros and multiple poles in $\mD.$ Let $Q[f_j](z):=f_jf_j'.$ Then $w=3$ and $Q[f_j](z)=4j^2(\tan 2z)^3 \sec^2 2z.$ Take $h(z)=-4/\sin 2z \cos 2z.$ Then $Q[f_j](z)-h(z)=((4j^2(\sin 2z)^4)+4(\cos 2z)^4)/\sin 2z (\cos 2z)^5$ has at most $w-1=2$ distinct zeros in $\mD.$ However, $\mathcal{F}$ is not normal in $\mD,$ {\color[rgb]{0,0,1}since $f_j(0)=0$ and $f_j(z)\rightarrow\infty$ in $\mD\setminus\{0\}.$}
\end{example}

The next example demonstrates that the condition ``$Q[f]-h$ has at most $w-1$ distinct zeros in D" in Theorem \ref{thm:7} is best possible:

\begin{example}\label{exp:d3}
Consider the family $$\mathcal{F}:=\left\{f_j: f_j(z)=\frac{1}{jz^2},~ z\in\mD,~j\geq 3,~j\in\mN\right\}$$ and let $Q[f_j](z):=f_j'.$ Then $w=2$ and $Q[f_j](z)=-2/jz^3.$ Let $h(z)=1/z.$ Then $Q[f_j](z)-h(z)=(-2-jz^2)/jz^3$ has exactly $w=2$ distinct zeros in $\mD.$ However, the family $\mathcal{F}$ is not normal in $\mD,$ {\color[rgb]{0,0,1}since $f_j(0)=\infty$ and $f_j(z)\rightarrow 0$ in $\mD\setminus\{0\}.$}
\end{example}

{\color[rgb]{0,0,1}In \cite{bharti}, the first author recently proved the following result:

\begin{thm}\label{thm:bharti}\cite[Theorem 1]{bharti}
Let $\left\{f_j\right\}\subset\mathcal{M}(D)$ and $\left\{h_j\right\}\subset\mathcal{H}(D)$ be such that $h_j\rightarrow h$ locally uniformly in $D,$ where $h\in\mathcal{H}(D)$ and $h\not\equiv 0.$ Let $Q[f_j]$ be a differential polynomial of $f_j$ as defined in \eqref{eqn:3} having degree $d>1$ and weight $w.$ If, for each $j,$ $f_j(z)\neq 0$ and $Q[f_j]-h_j$ has at most $w-1$ zeros, ignoring multiplicities, in $D,$ then $\left\{f_j\right\}$ is normal in $D.$
\end{thm}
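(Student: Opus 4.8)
The plan is to argue by contradiction via the Zalcman--Pang rescaling method, reducing normality to a value-distribution statement for $Q[g]$ on a rescaled limit $g$. Since normality is local, it suffices to show $\{f_j\}$ is normal at each $z_0\in D$. I would first dispose of the generic points where $h(z_0)\neq 0$, and then handle the isolated points of $h^{-1}(0)$ separately. The exponent that drives everything is $\alpha:=q'/d$: because $q'>0$ and $d>1$ this is a well-defined positive number, and since the $f_j$ are zero-free the Zalcman--Pang lemma applies for this positive exponent with no upper restriction.

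Suppose $\{f_j\}$ is not normal at some $z_0$ with $h(z_0)\neq 0$. The lemma then yields a subsequence $f_{j_n}$, points $z_n\to z_0$, and radii $\rho_n\to 0^+$ such that $g_n(\zeta):=\rho_n^{-\alpha}f_{j_n}(z_n+\rho_n\zeta)$ converges locally uniformly in the spherical metric to a nonconstant meromorphic $g$ on $\mC$; by Hurwitz and $f_{j_n}\neq 0$ the limit $g$ is zero-free. The key computation is that $\alpha$ makes $Q$ scale-invariant: from $f_{j_n}^{(m)}(z_n+\rho_n\zeta)=\rho_n^{\alpha-m}g_n^{(m)}(\zeta)$ and the product form of $Q$ one gets $(f_{j_n}^{p_i})^{(q_i)}(z_n+\rho_n\zeta)=\rho_n^{\alpha p_i-q_i}(g_n^{p_i})^{(q_i)}(\zeta)$, hence $Q[f_{j_n}](z_n+\rho_n\zeta)=\rho_n^{\alpha d-q'}Q[g_n](\zeta)=Q[g_n](\zeta)$ since $\alpha d-q'=0$. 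As $h_{j_n}(z_n+\rho_n\zeta)\to h(z_0)=:c$ locally uniformly, we obtain $Q[g_n]-h_{j_n}(z_n+\rho_n\,\cdot\,)\to Q[g]-c$. A Hurwitz argument transfers the hypothesis: were $Q[g]-c$ to have $w$ distinct zeros $\zeta_1,\dots,\zeta_w$, each would attract a genuine zero of $Q[f_{j_n}]-h_{j_n}$ at the distinct points $z_n+\rho_n\zeta_i$, giving $w>w-1$ zeros in $D$ for large $n$. Thus $Q[g]-c$ has at most $w-1$ distinct zeros, and using that $g$ is zero-free and nonconstant together with $p_i\ge q_i$ one rules out $Q[g]\equiv c$.

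The heart of the matter is the value-distribution statement that this is impossible: a nonconstant, zero-free meromorphic $g$ on $\mC$ forces $Q[g]-c$ (with $c\neq 0$) to have at least $w$ distinct zeros. For transcendental $g$ I would run Nevanlinna theory: writing each $(g^{p_i})^{(q_i)}/g^{p_i}$ as a differential polynomial in logarithmic derivatives gives $Q[g]=g^{d}L$ with $m(r,L)=S(r,g)$; combining the main theorems for $Q[g]-c$ with the vanishing of $N(r,1/g)$ (as $g$ is zero-free) and the pole structure of $Q[g]$ (a pole of $g$ of order $m$ produces a pole of $Q[g]$ of order $md+q'$) yields a Nevanlinna inequality in which, precisely because $d>1$, the coefficient of $T(r,g)$ on the zero-counting side is strictly positive, forcing $\overline N(r,1/(Q[g]-c))$ to grow like $T(r,g)$ and hence infinitely many distinct zeros. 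For rational $g=\mathrm{const}/\prod(z-a_\nu)^{m_\nu}$ one counts directly: $Q[g]-c\to -c\neq0$ at $\infty$, so its numbers of zeros and poles (with multiplicity) agree and equal $d\sum m_\nu+q'\cdot(\#\text{poles})\ge d+q'=w$, and one verifies these zeros are simple enough that the distinct count remains at least $w$. This contradiction gives normality on $D\setminus h^{-1}(0)$.

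The main obstacle is $h^{-1}(0)$, which is nonempty in general since $h$ is only holomorphic and $\not\equiv0$. At such a $z_0$ the exponent $\alpha=q'/d$ degenerates: the limit equation becomes $Q[g]=0$, and a zero-free entire limit $g=e^{a\zeta+b}$ gives $Q[g]=Ae^{ad\zeta}$ with no zeros, so no contradiction arises; families like $f_j(z)=e^{jz}/j$ show that local rescaling alone cannot see the obstruction. The remedy I would pursue is to rescale at the natural scale of the vanishing: if $\mathrm{ord}_{z_0}h=\ell$, take $\alpha=(q'+\ell)/d$ and write $z_n-z_0=\rho_n\tau_n$, so that $\rho_n^{-\ell}h_{j_n}(z_n+\rho_n\zeta)$ converges (after a subsequence with $\tau_n\to\tau$) to a nonzero polynomial $P(\zeta)=C(\zeta+\tau)^{\ell}$, whence the balanced limit is $Q[g]=P$. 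The same dichotomy then applies: for entire $g=e^{a\zeta+b}$ the order-one entire function $Ae^{ad\zeta}-P(\zeta)$ is not a polynomial and so has infinitely many zeros, while for $g$ with poles the rational/Nevanlinna count again yields at least $w$ distinct zeros, contradicting the at-most-$(w-1)$ hypothesis. I expect the hardest bookkeeping to be controlling the rescaling centers relative to $z_0$ (the behavior of $\tau_n$, in particular $\tau_n\to\infty$, which must be folded back into the zero-free analysis above), together with keeping every count a count of \emph{distinct} zeros throughout.
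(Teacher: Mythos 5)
You are proving a statement that the paper itself does not prove: Theorem \ref{thm:bharti} is imported verbatim from \cite{bharti}, and the present paper only establishes the companion results (Lemma \ref{lem:3} and Theorem \ref{thm:7}) in which the hypothesis $d>1$ is traded for multiple poles of the $f_j$ and control on the poles of $h$. That said, your architecture is exactly the one this machinery uses: Zalcman--Pang with exponent $\alpha=q'/d=(w-d)/d$ (legitimate since the $f_j$ are zero-free), the scale invariance $Q[f_{j_n}](z_n+\rho_n\zeta)=Q[g_n](\zeta)$, Hurwitz transfer of the ``at most $w-1$ distinct zeros'' hypothesis to the limit, and a value-distribution contradiction split into transcendental and rational cases. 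Your transcendental case is sound and is precisely the quoted Lemma \ref{lem:anil}: for zero-free $g$ one has $\delta(0;g)\,d_0+\Theta(\infty;g)\geq d_0=d\geq 2>1$, which is where $d>1$ does the work that multiple poles do in Lemma \ref{lem:nik}. The genuine gap is the rational case. Counting zeros of $Q[g]-c$ \emph{with multiplicity} indeed gives $dM+q't\geq w$, but your phrase ``one verifies these zeros are simple enough that the distinct count remains at least $w$'' is not a verification, and nothing in the multiplicity count prevents, a priori, the numerator of $Q[g]-c$ from being a single factor $(z-z_1)^{dM+q't}$. Ruling this out is the entire content of the paper's Lemma \ref{lem:2} (Chang's method): one writes $Q[R]-h$ in factored form, passes to $r=1/z$, takes logarithmic derivatives of the resulting polynomial identity near $r=0$, compares Taylor coefficients, and uses a Vandermonde-type rank bound to force the number $q$ of distinct zeros to satisfy $(M-N-1)d+w+V<t+v+q$, whence $q\geq w$. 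This is the hardest technical component of the whole circle of results, and your proposal replaces it with an assertion.

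Your handling of the zeros of $h$ also has two unresolved points, which matter because this is exactly where Theorem \ref{thm:bharti} differs from the nonvanishing-$h$ Lemma \ref{lem:3}. First, the sub-case $|z_n-z_0|/\rho_n\to\infty$, which you defer as ``bookkeeping,'' in fact requires a separate argument: in the paper's analogous situation (Case 1 of the proof of Theorem \ref{thm:7}) one must introduce a second auxiliary family rescaled at the scale $|z_j|$, deduce its normality from the already-proved nonvanishing case, and then run a dichotomy on the value of the limit at $0$ (limit $\equiv\infty$ versus finite, the latter forcing $g$ to be a polynomial) to reach the contradiction; none of this is routine. Second, since you assume only $h_j\to h$ locally uniformly, the convergence $\rho_n^{-\ell}h_{j_n}(z_n+\rho_n\zeta)\to C(\zeta+\tau)^{\ell}$ that your balanced rescaling needs would require $\sup|h_{j_n}-h|=o(\rho_n^{\ell})$ on the relevant disks, and locally uniform convergence gives no such rate; handling genuine sequences $\{h_j\}$ at zeros of $h$ needs an additional device (e.g., working with the zeros of $h_{j_n}$ themselves), which the proposal does not supply.
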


In particular, if $\mathcal{F}\subset\mathcal{M}(D)$ is a family of non-vanishing functions and if $h\in\cH(D),~h\not\equiv 0$ such that, for each $f\in\mathcal{F},$ $Q[f]-h$ has at most $w-1$ zeros, ignoring multiplicities, in $D,$ then the normality of $\mathcal{F}$ in $D$ follows immediately from Theorem \ref{thm:bharti}. To see this, suppose, on the contrary, that $\mathcal{F}$ is not normal at $z_0\in D.$ Then by Zalcman's lemma \cite[p. 101]{schiff}, there exist a sequence of points $\left\{z_j\right\}\subset D$ with $z_j\rightarrow z_0,$ a sequence of positive real numbers $\rho_j\rightarrow 0,$ and a sequence $\left\{f_j\right\}\subset\mathcal{F}$ such that the sequence $g_j(\zeta):=f_j(z_j+\rho_j\zeta)$ converges spherically locally uniformly in $\mathbb{C}$ to a non-constant meromorphic function $g(\zeta).$ On the other hand, by assumption, we have $Q[f_j]-h$ has at most $w-1$ distinct zeros in $D$ and so by Theorem \ref{thm:bharti}, $\{f_j\}$ is normal in $D.$ Then, by the converse of Zalcman's lemma \cite[p. 103]{schiff}, the sequence $f_j(z_j+\rho_j\zeta)$ must converge spherically locally uniformly in $\mathbb{C}$ to a constant, which is a contradiction to the fact that $g$ is non-constant.} %Thus, we have proved the following result which is a differential polynomial analogue of Theorem \ref{thm:deng}.

%\begin{theorem}\label{thm:1}
%Let $\cF\subset\mathcal{M}(D)$ be a family of non-vanishing functions and let $h\in\cH(D)$ be such that $h\not\equiv 0.$ Let $Q[f]$ be a differential polynomial of $f$ as defined in \eqref{eqn:3} having degree $d>1$ and weight $w.$ If, for each $f\in\cF,$ $Q[f]-h$ has at most $w-1$ zeros, ignoring multiplicities, in $D,$ then $\cF$ is normal in $D.$
%\end{theorem}

\medskip

It is worthwhile to note that if functions in $\cF$ have only multiple poles, then the degree of $Q[f]$ in Theorem \ref{thm:bharti} can be taken to be one. Combining Theorems \ref{thm:7} and \ref{thm:bharti}, and using the fact that normality is a local property, we get the following result which gives an affirmative answer to Question \ref{q:1}.

\begin{theorem}\label{thm:nik-anil}
Let $\cF\subset\mathcal{M}(D)$ be a family of non-vanishing functions, all of whose poles are multiple and let $h\in\mathcal{M}(D)$ be such that all poles of $h$ are simple and $h\not\equiv 0,~\infty.$ If, for each $f\in\cF,$ $Q[f]-h$ has at most $w-1$ zeros, ignoring multiplicities, in $D,$ then $\cF$ is normal in $D.$ 
\end{theorem}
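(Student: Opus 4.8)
The plan is to exploit the fact that normality is a local property, so that it suffices to prove that $\cF$ is normal at every point $z_0\in D$, and then to glue together the two already-established criteria according to the local behaviour of $h$ at $z_0$. Since $h\not\equiv 0,\infty$, both its zeros and its poles are isolated; hence for each fixed $z_0$ I can pass to a disk $U\subseteq D$ centred at $z_0$ that contains no zero of $h$ and no pole of $h$ other than possibly $z_0$ itself. On such a $U$ the quantity $Q[f]-h$ still has at most $w-1$ distinct zeros for every $f\in\cF$ (restricting to a subdomain cannot create zeros), so the hypotheses transfer verbatim to $\cF|_{U}$.

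First I would treat the case $h(z_0)\neq 0$, which covers both the subcase where $h$ is finite and nonzero at $z_0$ and the subcase where $z_0$ is a simple pole of $h$. In either subcase one can shrink $U$ so that $h$ is zero-free on $U$, with $h\not\equiv\infty$ and its only possible pole, at $z_0$, simple; these are exactly the hypotheses of Theorem \ref{thm:7}. Applying Theorem \ref{thm:7} to $\cF|_{U}$ yields normality of $\cF$ on $U$, and in particular at $z_0$.

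Next I would treat the case $h(z_0)=0$. Here I shrink $U$ so that $h$ has no poles on $U$; then $h\in\mathcal{H}(U)$ and $h\not\equiv 0$, since its zeros are isolated. If the degree $d$ of $Q[f]$ satisfies $d>1$, the in-particular consequence of Theorem \ref{thm:bharti} recorded after its statement applies directly to $\cF|_{U}$ and gives normality at $z_0$. The remaining subcase $d=1$ requires the degree-one extension of Theorem \ref{thm:bharti} permitted by the multiple-poles hypothesis: one checks that $d=1$ together with $q'>0$ and $p_i\geq q_i$ forces $p_0=0$ and, in fact, $Q[f]=f'$ with $w=2$, so that the assumption becomes ``$f'-h$ has at most one zero'', which is precisely the case $k=1$ of Theorem \ref{thm:deng}; this again yields normality of $\cF|_{U}$ at $z_0$.

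Since every $z_0\in D$ falls under one of these two cases, $\cF$ is normal throughout $D$. The only genuinely delicate point will be the degree-one subcase: Theorem \ref{thm:bharti} is stated only for $d>1$, so I must verify that under the multiple-poles restriction the situation degenerates to the classical derivative criterion of Deng et al.; identifying $Q[f]$ with $f'$ and invoking Theorem \ref{thm:deng}, rather than re-running a Zalcman rescaling, is what makes this step clean. Everything else is bookkeeping: confirming that the localized hypotheses of Theorems \ref{thm:7} and \ref{thm:bharti} are met on $U$ and that the zero-counting condition survives restriction from $D$ to $U$.
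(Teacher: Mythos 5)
Your proposal is correct, and its overall structure is exactly the paper's: the paper's entire proof of this theorem is the remark that normality is a local property, so one combines Theorem \ref{thm:7} near points where $h$ is nonzero or has a simple pole with Theorem \ref{thm:bharti} near points where $h$ vanishes (where $h$ is locally holomorphic and $\not\equiv 0$) --- precisely your case split. The one place where you genuinely diverge is the degree-one subcase. The paper disposes of it by asserting, without proof, that ``if functions in $\cF$ have only multiple poles, then the degree of $Q[f]$ in Theorem \ref{thm:bharti} can be taken to be one.'' You instead observe that $d=1$, $q'>0$ and $p_i\geq q_i$ force $p_0=0$, $p'=1$ and $q'=1$, so that $Q[f]=f'$ and $w=2$, and the hypothesis collapses to ``$f'-h$ has at most one zero,'' which is the case $k=1$ of Theorem \ref{thm:deng}. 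This is a real improvement in rigor: it replaces the paper's unproven extension of Theorem \ref{thm:bharti} by a reduction to a cited result, at no extra cost, and it does not even need the multiple-poles hypothesis in that subcase. The rest of your argument (zeros and poles of $h$ are isolated since $h\not\equiv 0,\infty$; the zero-count of $Q[f]-h$ survives restriction to a subdisk; zeros cannot accumulate at a pole of $h$, so $h$ is zero-free near each of its poles) is exactly the bookkeeping the paper leaves implicit.
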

 
Taking $Q[f]=f^{(k)}$ in Theorem \ref{thm:nik-anil}, we recover Theorems \ref{thm:deng} and \ref{thm:chen} depending upon whether $h\in\cH(D)~\mbox{or }\cM(D).$ Moreover, from Theorem \ref{thm:nik-anil}, we immediately get the following:

\begin{corollary}
Let $\mathcal{F}\subset\mathcal{M}(D)$ be a family of non-vanishing functions, all of whose poles are multiple. If, for every $f\in\mathcal{F},$ $Q[f]$ has at most $w-1$ fixed points in $D,$ then $\mathcal{F}$ is normal in $D.$
\end{corollary}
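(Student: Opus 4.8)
The plan is to derive this as a direct specialization of Theorem \ref{thm:nik-anil}. The key observation is that ``$Q[f]$ has a fixed point at $z$'' means precisely that $Q[f](z) = z$, i.e.\ that $Q[f](z) - z = 0$. So the hypothesis that $Q[f]$ has at most $w-1$ fixed points in $D$ is exactly the statement that the function $Q[f] - h$ has at most $w-1$ zeros in $D$, where we take $h(z) := z$. To invoke Theorem \ref{thm:nik-anil}, I would verify that this choice of $h$ satisfies all the requisite conditions on the target function. First, $h(z) = z \in \mathcal{H}(D) \subset \mathcal{M}(D)$, so in particular $h \not\equiv \infty$. Second, $h(z) = z$ is entire and therefore has no poles at all in $D$, so the condition ``all poles of $h$ are simple'' holds vacuously. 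Third, $h(z) = z \not\equiv 0$ since it is a nonconstant function. Thus every hypothesis on $h$ in Theorem \ref{thm:nik-anil} is met.

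With $h(z) = z$ in hand, the remaining hypotheses of the Corollary transfer verbatim to those of Theorem \ref{thm:nik-anil}: the family $\mathcal{F} \subset \mathcal{M}(D)$ consists of non-vanishing functions all of whose poles are multiple, and for each $f \in \mathcal{F}$ the function $Q[f] - h$ has at most $w-1$ zeros (ignoring multiplicities) in $D$. Theorem \ref{thm:nik-anil} then yields that $\mathcal{F}$ is normal in $D$, which is the desired conclusion.

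There is essentially no obstacle here, since the argument is purely a matter of recognizing that the fixed-point condition is the zero-counting condition of Theorem \ref{thm:nik-anil} for the specific target $h(z) = z$, together with a routine check that this $h$ lies in the admissible class. The only point requiring a moment's care is confirming that the entire function $h(z) = z$ satisfies the pole condition, which it does trivially because it has no poles; one should state this explicitly so the reader sees that the Corollary is not vacuous and that $h \not\equiv 0, \infty$ is genuinely satisfied.
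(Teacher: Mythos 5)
Your proof is correct and is exactly the paper's intended argument: the paper states the corollary as an immediate consequence of Theorem \ref{thm:nik-anil}, obtained by taking $h(z)=z$, noting that fixed points of $Q[f]$ are precisely zeros of $Q[f]-h$ and that this $h$ (being entire, nonconstant, hence $\not\equiv 0,\infty$ with vacuously simple poles) lies in the admissible class. Your explicit verification of the hypotheses on $h$ is a faithful, slightly more detailed rendering of the same specialization.
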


\section{Preliminary lemmas}
 To prove the main results, we need some preliminary lemmas. Here, we assume that the standard notations of the Nevanlinna's value distribution theory like $m(r,f),~N(r,f),~T(r,f),~S(r,f)$ (see \cite{hayman}) are known to the reader. Recall that if $f,~\phi\in\cM(\mC),$ then we say that $\phi$ is a small function of $f$ if $T(r,\phi)=S(r,f),$ as $r\rightarrow\infty$ possibly outside a set of finite Lebesgue measure. Furthermore, the quantities $$\delta(a;f):=1-\limsup\limits_{r\to\infty}\frac{N\left(r,1/(f-a)\right)}{T(r,f)}$$ and $$\Theta(a;f):=1-\limsup\limits_{r\to\infty}\frac{\overline{N}\left(r,1/(f-a)\right)}{T(r,f)},$$ where $a\in\overline{\mC},$ are called the deficiency and the truncated defect of $f$ at $a,$ respectively.

The following lemma is an extension of the Zalcman-Pang Lemma due to Chen and Gu \cite{chen-gu} (cf. \cite[Lemma 2]{pang-zalcman}).

\begin{lemma}[Zalcman-Pang Lemma]\label{lem:zp}
Let $\mathcal{F}\subset\mathcal{M}(D)$ be such that each $f\in\cF,$ has zeros of multiplicity at least $m$ and poles of multiplicity at least $p.$ Let $-p<\alpha<m.$ If $\mathcal{F}$ is not normal at $z_0\in D,$ then there exist
sequences $\left\{f_j\right\}\subset\mathcal{F},$ $\left\{z_j\right\}\subset D$ satisfying $z_j\rightarrow z_0$ and positive numbers $\rho_j$ with $\rho_j\rightarrow 0$ such that the sequence $\left\{g_j\right\}$ defined by $$g_j(\zeta):=\rho_{j}^{-\alpha}f_j(z_j+\rho_j\zeta)\rightarrow g(\zeta)$$ locally uniformly in $\mathbb{C}$ with respect to the spherical metric, where $g$ is a non-constant meromorphic function on $\mathbb{C}$ such that for every $\zeta\in\mathbb{C},$ $g^{\#}(\zeta)\leq g^{\#}(0)=1.$

If functions in $\cF$ have no zero, then $\alpha\in (-p,+\infty).$
 \end{lemma}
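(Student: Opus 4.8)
The plan is to prove the lemma by the rescaling (``zooming'') method of Zalcman, in the form adapted to the exponent $\alpha$, invoking Marty's criterion in both directions. Since normality is a local property, I would first translate so that $z_0=0$ and fix $r>0$ with $\overline{D(0,r)}\subset D$; it then suffices to construct the sequences $\{f_j\}$, $\{z_j\}$, $\{\rho_j\}$ on this closed disk. Because $\mathcal{F}$ is not normal at $0$, Marty's criterion (see \cite{schiff}) guarantees that the spherical derivatives $\{f^\#:f\in\mathcal{F}\}$ are not uniformly bounded on $\overline{D(0,r)}$, so there is a sequence $f_j\in\mathcal{F}$ with $\max_{\overline{D(0,r)}}f_j^\#\to\infty$.

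The heart of the argument is the selection of the centres $z_j$ and the scales $\rho_j$. Here I would maximize, over the closed disk, the spherical derivative of the \emph{rescaled} function $g_j(\zeta)=\rho^{-\alpha}f_j(z+\rho\zeta)$ evaluated at its base point, multiplied by a weight such as $(r-|z|)$ that vanishes on the bounding circle. The weight forces the maximizer $z_j$ into the interior, so $z_j\to 0$, and one fixes $\rho_j\to0^{+}$ by the single normalizing equation $g_j^\#(0)=1$. A direct estimate based on the maximizing property of $z_j$ then yields $g_j^\#(\zeta)\le 1+o(1)$ uniformly on each fixed compact subset of $\mathbb{C}$. At this stage the hypotheses $-p<\alpha<m$ enter decisively: near a zero of $f_j$ (multiplicity $\ge m$) the factor $\rho^{-\alpha}$ would force $g_j$ to vanish in the limit unless $\alpha<m$, while near a pole (multiplicity $\ge p$) it would force $g_j$ to blow up unless $\alpha>-p$, so the two inequalities are exactly what keep the rescaled functions and their spherical derivatives under control.

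With the uniform bound in hand, Marty's criterion applied in the reverse direction shows that $\{g_j\}$ is normal on $\mathbb{C}$, and I would extract a subsequence converging spherically locally uniformly to some $g$ that is meromorphic on $\mathbb{C}$ or identically $\infty$. The bound descends to $g^\#(\zeta)\le g^\#(0)=1$, and since $g^\#(0)=1\neq0$ the limit $g$ is non-constant, so in particular $g\not\equiv\infty$. The final clause is then immediate: if no $f\in\mathcal{F}$ has a zero, then $m$ may be taken arbitrarily large, the upper constraint $\alpha<m$ becomes vacuous, and only $\alpha>-p$ survives, giving $\alpha\in(-p,+\infty)$.

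I expect the main obstacle to be the selection step together with the uniform estimate $g_j^\#(\zeta)\le 1+o(1)$ and the non-degeneracy of $g$. The subtlety is that, unlike the classical case $\alpha=0$ in which one simply renormalizes the plain spherical derivative (as in \cite{schiff,zalcman}), the $\alpha$-adjusted rescaling requires choosing the weight and the normalizing scale so that the maximizer stays interior and the $\rho^{-\alpha}$ factor neither collapses nor inflates $g_j$ near zeros and poles; this is precisely where $-p<\alpha<m$ is used, and it is the technical core of the Chen--Gu refinement (cf. \cite{chen-gu,pang-zalcman}).
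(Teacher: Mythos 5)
The first thing to note is that the paper offers no proof of this lemma at all: it is quoted as a known result of Chen and Gu \cite{chen-gu} (cf.\ Lemma 2 of \cite{pang-zalcman}), so your attempt can only be measured against the standard published argument, which is indeed the rescaling scheme you outline. Your sketch gets the global architecture right: Marty's criterion in both directions, a weighted selection of the base points, the normalization $g_j^{\#}(0)=1$, the descent of the bound $g_j^{\#}\le 1+o(1)$ to the limit, and the correct (and genuinely easy) observation that a zero-free family satisfies the multiplicity-$m$ hypothesis vacuously for every $m$, which yields the final clause.

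There are, however, two genuine gaps, both sitting exactly where you concede the difficulty lies, and they are the entire content of the lemma. (i) With a \emph{fixed} radius $r$, the maximizer $z_j$ of a weighted functional need not tend to $z_0$: non-normality at $z_0$ is a statement about every neighbourhood of $z_0$, and $\mathcal{F}$ may also be non-normal at another point of $\overline{D(0,r)}$ which captures the maximum for every $j$. For instance, $f_j(z)=1/(jz)+1/\bigl(j^2(z-\tfrac12)\bigr)$ is non-normal at $0$, but the maximum of $(r-|z|)f_j^{\#}(z)$ is of order $j^2$ and sits near $z=\tfrac12$, so your construction would produce $z_j\to\tfrac12$, not $z_j\to 0$. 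The standard repair is to run the whole selection on shrinking disks $D(z_0,r_j)$ with $r_j\to 0$. (ii) The selection step as you describe it is circular: the quantity you propose to maximize, ``the spherical derivative of the rescaled function at its base point times $(r-|z|)$,'' depends on the pair $(z,\rho)$, while $\rho_j$ is only to be fixed afterwards by the equation $g_j^{\#}(0)=1$; moreover that equation, $\rho^{1+\alpha}|f_j'(z_j)|=\rho^{2\alpha}+|f_j(z_j)|^2$, need not have a (small) solution without further argument. In the Pang/Chen--Gu proof one maximizes a concrete function of $z$ alone, e.g.\ $E_j(z)=\frac{(r-|z|)^{1+\alpha}|f_j'(z)|}{(r-|z|)^{2\alpha}+|f_j(z)|^2}$, proves $\max_z E_j\to\infty$, defines $\rho_j$ explicitly from the maximizer $z_j$, and only then derives $\rho_j\to 0$, $\rho_j/(r-|z_j|)\to 0$, and $g_j^{\#}(\zeta)\le 1+o(1)$ on compacta. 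It is in these estimates --- not in the heuristic behaviour of $\rho^{-\alpha}f_j$ near its zeros and poles --- that the hypotheses $\alpha<m$ and $\alpha>-p$ are actually consumed: they are what make $E_j$ comparable to the spherical derivative of the rescaled function and keep the normalization nondegenerate when $z_j$ drifts toward a zero or pole of $f_j$ (your example with $\alpha=2$, $m=1$, $f_j(z)=jz$ shows the conclusion itself fails outside this range). As written, then, your proposal is a faithful outline of the known proof rather than a proof; the missing selection estimates cannot be waved through, since everything else in the argument is routine.
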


\begin{lemma}\cite[Theorem 1.1]{lahiri}\label{lem:anil}
  Let $f\in\cM(\mC)$ be a transcendental function and $\phi~(\not\equiv 0)$ be a small function of $f$ such that $f$ and $\phi$ do not have any common pole. Let $P[f]$ be a differential polynomial of $f$ and $\delta(0; f) d_0+ \Theta(\infty; f) > 1$, where $d_0 (\geq 1)$ is the lower degree of $P[f]$. Then $P[f] - \phi$ has infinitely many zeros.
 \end{lemma}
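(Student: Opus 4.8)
The plan is to argue by contradiction and reduce the statement to a single Milloux--Clunie type inequality for $P[f]$, from which the deficiency bound is read off directly. Assume that $P[f]-\phi$ has only finitely many zeros. Since $f$ is transcendental, $\log r=o(T(r,f))$, so this assumption gives
\begin{equation*}
N\left(r,\frac{1}{P[f]-\phi}\right)=O(\log r)=S(r,f).
\end{equation*}
The aim is then to deduce $\delta(0;f)\,d_0+\Theta(\infty;f)\le 1$, contradicting the hypothesis.

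The heart of the matter is the fundamental inequality
\begin{equation*}
d_0\,T(r,f)\le \overline{N}(r,f)+d_0\,N\left(r,\frac{1}{f}\right)+N\left(r,\frac{1}{P[f]-\phi}\right)+S(r,f).\tag{$\ast$}
\end{equation*}
Granting $(\ast)$, the argument closes quickly: the finiteness assumption makes the third term $S(r,f)$, so dividing by $T(r,f)$ and letting $r\to\infty$ outside the exceptional set yields
\begin{align*}
d_0&\le d_0\limsup_{r\to\infty}\frac{N(r,1/f)}{T(r,f)}+\limsup_{r\to\infty}\frac{\overline{N}(r,f)}{T(r,f)}\\
&=d_0\bigl(1-\delta(0;f)\bigr)+\bigl(1-\Theta(\infty;f)\bigr),
\end{align*}
which rearranges to exactly $\delta(0;f)\,d_0+\Theta(\infty;f)\le 1$, the desired contradiction.

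To prove $(\ast)$ I would write $P[f]=\sum_i a_iM_i[f]$ with each monomial $M_i[f]$ of degree $d_i\ge d_0$, and use the lemma on the logarithmic derivative to get $m(r,M_i[f]/f^{d_i})=S(r,f)$ for the normalized factor $M_i[f]/f^{d_i}=\prod_j(f^{(j)}/f)^{n_{ij}}$. Starting from the factorization $\frac{1}{f^{d_0}}=\frac{1}{P[f]-\phi}\cdot\frac{P[f]-\phi}{f^{d_0}}$ and the First Main Theorem $T(r,1/f)=T(r,f)+O(1)$, the proof of $(\ast)$ is reduced to controlling $m(r,1/(P[f]-\phi))$ together with the zeros and poles of $P[f]$. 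The counting side is then handled by a local analysis: a pole of $f$ produces a pole of $P[f]$ of controlled order and so contributes only $\overline{N}(r,f)$ up to $S(r,f)$; the multiplicity of a zero of $P[f]$ at a zero of $f$ grows like $d_0$ times that of $f$, which is what produces the weight $d_0$ in front of $N(r,1/f)$; and the remaining zeros are absorbed into $N(r,1/(P[f]-\phi))$ via the Second Main Theorem applied to $P[f]$ with targets $0,\infty,\phi$ (here $\phi$ is a small function of $P[f]$ because it is small for $f$ and $d_0\ge 1$). The hypothesis that $f$ and $\phi$ have no common pole enters precisely at this stage: it guarantees that the poles of $\phi$ neither create nor cancel poles of $P[f]-\phi$, so the pole contribution stays bounded by $\overline{N}(r,f)$.

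The main obstacle is $(\ast)$ itself, and within it the non-homogeneous case $d>d_0$. When $P[f]$ is not homogeneous, the crude proximity estimate for $P[f]/f^{d_0}$ carries an unwanted $(d-d_0)\,m(r,f)$ term, so one cannot finish by bounding proximity functions alone; the higher-degree monomials must be shown to contribute compensating zeros and poles that are absorbed into the counting functions already present. Securing the sharp weight $d_0$ (rather than $d$) in front of $N(r,1/f)$, while keeping every error term genuinely $S(r,f)$, is the delicate point, and it is exactly here that the lower-degree hypothesis $d_0\ge 1$ and the Clunie-type cancellation are used.
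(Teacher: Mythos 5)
You should first be aware that the paper contains no proof of this lemma: it is imported verbatim as Theorem 1.1 of \cite{lahiri}, and the authors' only added content is the remark that the no-common-pole hypothesis turns out to be dispensable. So the comparison is against the cited source, whose route your skeleton correctly anticipates: the hypothesis $\delta(0;f)\,d_0+\Theta(\infty;f)>1$ is tailor-made to be read off from a Milloux--Clunie type estimate of the shape of your $(\ast)$. Your endgame is sound: finitely many zeros plus $f$ transcendental gives $N(r,1/(P[f]-\phi))=O(\log r)=S(r,f)$, and dividing by $T(r,f)$ and passing to the $\limsup$ (harmless even though $S(r,f)$ is controlled only outside a set $E$ of finite measure, since a $\limsup$ taken along $r\notin E$ is majorized by the unrestricted one defining $\delta$ and $\Theta$) yields $d_0\,\delta(0;f)+\Theta(\infty;f)\le 1$, the desired contradiction.

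The genuine gap is that $(\ast)$ --- which \emph{is} the lemma, everything else being two lines of arithmetic --- is asserted rather than proved, and you concede the decisive non-homogeneous case yourself. Moreover, the mechanisms you sketch for it would not go through as written. First, at a zero of $f$ of order $p$ a monomial of degree $d_i$ and weight $w_i$ vanishes to order $d_ip-(w_i-d_i)$, not ``$d_0$ times'' the order of the zero; the resulting correction terms, of size up to $(w-d_0)\overline{N}(r,1/f)$, are not $S(r,f)$, and taming them while keeping the coefficient exactly $d_0$ on $N(r,1/f)$ is the crux, not a footnote. Second, a pole of $f$ of order $p$ is a pole of $M_i[f]$ of order $d_ip+(w_i-d_i)$, which is not ``of controlled order''; the coefficient $1$ on $\overline{N}(r,f)$ comes from cancellation between proximity and counting terms via the first main theorem, not from a local bound. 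Third, the second main theorem for the three targets $0,\infty,\phi$ of $P[f]$ cannot by itself produce the factor $d_0$: take $P[f]=f^2$ (admissible under the paper's Definition 1.1) with $\delta(0;f)=3/5$ and $\Theta(\infty;f)=0$, so the lemma's hypothesis holds with $d_0=2$; since $T(r,f^2)=2T(r,f)+O(1)$ one has $\delta(0;f^2)=\delta(0;f)$, and the three-small-target defect relation $\delta(0;f^2)+\delta(\phi;f^2)+\delta(\infty;f^2)\le 2$ is perfectly compatible with $\delta(\phi;f^2)=1$, i.e.\ with $P[f]-\phi$ having few zeros. Only the multiplicity amplification at zeros of $f$, which is exactly what $(\ast)$ quantifies, rules this out; so the step you delegate to the second main theorem is where the real work of \cite{lahiri} lives, and your proposal would need to reproduce that derivation (in the course of which one also sees, as the present authors note, where the common-pole condition enters and why it can be dropped) to count as a proof.
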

 
It is noteworthy to mention that after going through the proof of Lemma \ref{lem:anil}, we note that the condition ``$f$ and $\phi$ do not have any common pole" is not required.

\begin{lemma}\label{lem:nik}
Let $f\in\mathcal{M}(\mC)$ be a non-vanishing transcendental function, all of whose poles are multiple and let $\phi~(\not\equiv 0)$ be a small function of $f.$ Then $Q[f]-\phi$ has infinitely many zeros in $\mC.$
\end{lemma}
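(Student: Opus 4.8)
The plan is to deduce the statement directly from Lemma \ref{lem:anil} applied to the differential polynomial $P[f] = Q[f]$. The whole task reduces to verifying the single hypothesis $\delta(0;f)\,d_0 + \Theta(\infty;f) > 1$, where $d_0\,(\ge 1)$ denotes the lower degree of $Q[f]$.

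First I would pin down $d_0$. Expanding each factor $(f^{p_i})^{(q_i)}$ by the Leibniz rule exactly as in the introduction, every term produced is a product of precisely $p_i$ (derivatives of) copies of $f$; multiplying the $k$ factors together with $f^{p_0}$, each monomial occurring in $Q[f]$ therefore has degree exactly $p_0 + \sum_{i=1}^{k} p_i = p_0 + p' = d$. Consequently $Q[f]$ is homogeneous and its lower degree coincides with its degree, $d_0 = d = p_0 + p'$. Since $q' > 0$ forces some $q_i \ge 1$ and hence $p_i \ge q_i \ge 1$, we have $p' \ge 1$, so $d_0 \ge 1$, as Lemma \ref{lem:anil} requires.

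Next I would estimate the two Nevanlinna quantities. Because $f$ is non-vanishing, $N(r,1/f)\equiv 0$ and thus $\delta(0;f) = 1$. Because every pole of $f$ has multiplicity at least $2$, one has $\overline{N}(r,f) \le \tfrac12 N(r,f) \le \tfrac12 T(r,f)$, whence $\Theta(\infty;f) \ge \tfrac12$. Combining these two facts gives $\delta(0;f)\,d_0 + \Theta(\infty;f) = d_0 + \Theta(\infty;f) \ge 1 + \tfrac12 > 1$, so the hypothesis of Lemma \ref{lem:anil} holds.

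With that verified, Lemma \ref{lem:anil}—together with the remark immediately following it, which removes the restriction that $f$ and $\phi$ have no common pole—applies at once and yields that $Q[f] - \phi$ has infinitely many zeros in $\mC$. The only genuinely delicate point is the computation of the lower degree: one must notice that the Leibniz expansion preserves the total degree in $f$, so that $Q[f]$ is homogeneous and $d_0 = d$ rather than something smaller; the two deficiency/defect estimates are then immediate, and the rest of the argument is purely a matter of invoking the cited lemma.
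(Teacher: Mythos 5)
Your proposal is correct and follows essentially the same route as the paper: both reduce the statement to Lemma \ref{lem:anil} and verify $\delta(0;f)\,d_0+\Theta(\infty;f)\geq 3/2>1$ using $\delta(0;f)=1$ (non-vanishing) and $\Theta(\infty;f)\geq 1/2$ (multiple poles). Your explicit verification that $Q[f]$ is homogeneous, so that $d_0=d\geq 1$, and your invocation of the remark removing the common-pole hypothesis merely spell out details the paper leaves implicit.
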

 
\begin{proof}
In view of Lemma \ref{lem:anil}, we only need to show that $\delta(0; f)d + \Theta(\infty; f) > 1,$ where $d$ is the degree of $Q[f].$
Since $f$ is non-vanishing, by definition of $\delta(0;f),$ it follows that $\delta(0;f)=1.$ Further, since all poles of $f$ are multiple, one can easily see that $\Theta(\infty; f)\geq 1/2$ and hence $\delta(0; f)d + \Theta(\infty; f)\geq 3/2 >1,$ as desired.
\end{proof}   
		
Using the method of Chang \cite{chang} with significant modifications, we obtain the following lemma which generalizes Lemma 2.4 in \cite{chang} and Lemma 11 in \cite{deng}.

\begin{lemma}\label{lem:2}
	Let
\begin{equation}\label{eq:l1.1}
R(z)=\frac{A\prod\limits_{i=1}^{s}\left(z+\alpha_i\right)^{n_i}}{\prod\limits_{i=1}^{t}\left(z+\beta_i\right)^{m_i}}
\end{equation}
be a rational function and let $\sum\limits_{i=1}^{s}n_i=N,$ $\sum\limits_{i=1}^{t}m_i=M$ and assume that $M-N\geq t.$ Let $h~(\not\equiv 0)$ be a polynomial. Let $Q[R]$ be a differential polynomial of $R$ as defined in \eqref{eqn:3} having degree $d$ and weight $w.$ Then $Q[R]-h$ has at least $w$ distinct zeros in $\mathbb{C}.$  
\end{lemma}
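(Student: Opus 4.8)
The plan is to regard everything on the Riemann sphere and reduce the whole statement to elementary degree bookkeeping for the rational function $\Phi:=Q[R]-h$; the only genuinely delicate point will be controlling the \emph{multiplicities} of the zeros. First I would record the local data of $Q[R]$. Since $p_i\ge q_i$ and $q'>0$ we have $d=p_0+p'\ge 1$. A Leibniz-rule computation at a pole $-\beta_i$ of $R$ (order $m_i$) shows that $(R^{p_\ell})^{(q_\ell)}$ contributes a pole of order $p_\ell m_i+q_\ell$ and $R^{p_0}$ a pole of order $p_0m_i$, so $Q[R]$ has a pole at $-\beta_i$ of order exactly $dm_i+q'\ge d+q'=w$; thus \emph{every} pole of $Q[R]$ is of order at least $w$. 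Because $M-N\ge t\ge 1$ we have $R\sim Az^{N-M}\to 0$, and differentiating term by term shows $Q[R]$ vanishes at $\infty$ to order $\nu:=d(M-N)+q'$. As $h$ is a polynomial, $\Phi$ has the same finite poles as $Q[R]$ (total multiplicity $dM+tq'$), while at $\infty$ it either tends to $-h(\infty)\neq 0$ (when $h$ is constant) or has a pole of order $\deg h$.

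Balancing zeros against poles on $\widehat{\mathbb{C}}$ then gives the number $Z(\Phi)$ of finite zeros of $\Phi$, counted with multiplicity: $Z(\Phi)=dM+tq'+\deg h$ (reading $\deg h:=0$ in the constant case). If these zeros were all simple we would already be finished, since $dM+tq'\ge t(d+q')=tw\ge w$. Hence the entire problem is to bound $\sum_j(\mu_j-1)=Z(\Phi)-\ell$ from above, where $\gamma_1,\dots,\gamma_\ell$ are the distinct finite zeros of $\Phi$ with multiplicities $\mu_j$; the desired conclusion $\ell\ge w$ is equivalent to $\sum_j(\mu_j-1)\le Z(\Phi)-w$.

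The clean case is $h$ constant (equivalently, whenever $\Phi$ acquires no pole at $\infty$). Here I would compare $\Phi$ with $\Phi'=Q[R]'$: each $\gamma_j$ is a zero of $\Phi'$ of order exactly $\mu_j-1$, so $\sum_j(\mu_j-1)\le Z(\Phi')$. The same bookkeeping for $\Phi'$ — its finite poles total $dM+tq'+t$, and at $\infty$ it vanishes to order $\nu+1$ — yields $Z(\Phi')=dN+(t-1)(q'+1)$, whence
\begin{equation*}
\ell\ \ge\ Z(\Phi)-Z(\Phi')\ =\ d(M-N)+q'-(t-1)\ \ge\ t(d-1)+q'+1\ \ge\ d+q'\ =\ w,
\end{equation*}
the last inequality being $(t-1)(d-1)\ge 0$. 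Conceptually this says that the high-order vanishing of $Q[R]$ at $\infty$ makes $\Phi$ heavily ramified over the value $-h(\infty)$ at $\infty$, so that the ramification budget $2\deg\Phi-2$ is forced onto many distinct zeros.

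The main obstacle is the case $\deg h\ge 1$. Then $\Phi$ has a genuine pole at $\infty$, $\Phi'$ picks up $t$ extra poles while losing only one order at $\infty$, and the bound above collapses to the useless $\ell\ge 1-t$; equivalently, the ramification over $0$ and over $\infty$ no longer suffices, and one checks that the inequality really does fail for arbitrary rational functions of the same degree. So one must use the differential structure of $Q[R]$ in an essential way, which is the point of Chang's method. For a pure derivative $Q[R]=R^{(k)}$ the missing input is that the principal part of $R^{(k)}$ at each pole has \emph{vanishing} coefficients in degrees $-1,\dots,-k$; writing $\Phi=U/V$ with $V=\prod_i(z+\beta_i)^{dm_i+q'}$ and $U=B\prod_j(z-\gamma_j)^{\mu_j}$, these vanishings become enough linear constraints to rule out a configuration with too few distinct $\gamma_j$. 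The "significant modification" I expect to be the real work is precisely that a general $Q[R]$ is \emph{not} a pure derivative, so no such principal-part gap is available; I would instead exploit the factorization $Q[R]=R^{d}\prod_\ell (R^{p_\ell})^{(q_\ell)}/R^{p_\ell}$, in which the product factor is of low degree and vanishes at $\infty$ to order $q'$, and run a case analysis on the number $t$ and the multiplicities $m_i$ of the poles, combining the deep-pole bound (order $\ge w$) with the strong vanishing $\nu=d(M-N)+q'$ at $\infty$ to again pin down $\sum_j(\mu_j-1)$. Reconciling a nonconstant polynomial $h$ with the non-derivative algebra of $Q[R]$ is the crux, and is where I anticipate the bulk of the effort.
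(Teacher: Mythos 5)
Your treatment of the constant case is correct, and it is a genuinely different, more elementary route than the paper's: the zero/pole bookkeeping on the sphere for $\Phi=Q[R]-h$ and $\Phi'=Q[R]'$, using the exact pole orders $dm_i+q'$ at $-\beta_i$ and the exact vanishing order $d(M-N)+q'$ at $\infty$, does give
$\ell\;\ge\;Z(\Phi)-Z(\Phi')\;=\;d(M-N)+q'-(t-1)\;\ge\;(t-1)(d-1)+w\;\ge\;w.$
Incidentally, this is the only instance of the lemma the paper ever invokes: in the proof of Lemma 2.5 it is applied to $Q[F]-h(z_0)$ with $h(z_0)$ a constant. So your argument would in fact suffice for the paper's application. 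But it does not prove the lemma as stated, which allows an arbitrary polynomial $h\not\equiv 0$.

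For $\deg h\ge 1$ your proposal is a plan, not a proof. You verify (correctly) that the $\Phi$-versus-$\Phi'$ comparison degenerates to the useless bound $\ell\ge 1-t$, and the remedy you describe --- principal-part constraints, the factorization of $Q[R]$, a case analysis on $t$ and the $m_i$ --- is never carried out, nor is it clear how it would be. That is the genuine gap, and it is exactly where the paper's proof does its work. The paper writes $h(z)=B\prod_{i=1}^{v}(z+\gamma_i)^{v_i}$ with $V=\deg h$, sets
$Q[R]-h=C\prod_{i=1}^{q}(z+\delta_i)^{q_i}\big/\prod_{i=1}^{t}(z+\beta_i)^{(m_i-1)d+w}$
where $\delta_1,\dots,\delta_q$ are the distinct zeros whose number is to be bounded, substitutes $z=1/r$, normalizes the identity to the form $1+O\bigl(r^{(M-N)d+(w-d)+V}\bigr)$ as $r\to 0$, and then takes the logarithmic derivative, arriving at
\begin{equation*}
\sum\limits_{i=1}^{v}\frac{v_i\gamma_i}{1+\gamma_ir}+\sum\limits_{i=1}^{t}\bigl[(m_i-1)d+w\bigr]\frac{\beta_i}{1+\beta_ir}-\sum\limits_{i=1}^{q}\frac{q_i\delta_i}{1+\delta_ir}=O\left(r^{(M-N-1)d+w+V-1}\right)\quad\text{as } r\to 0.
\end{equation*}
Comparing Taylor coefficients at $r=0$ yields a homogeneous Vandermonde-type linear system in the $t+v+q$ points $\beta_i,\gamma_i,\delta_i$ admitting a nontrivial solution; the rank condition forces $(M-N-1)d+w+V<t+v+q$, and then $V\ge v$ and $M-N\ge t$ give $q\ge w$ (the paper's Cases 1--4 merely handle possible coincidences among the points $\beta_i,\gamma_i,\delta_i$). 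The mechanism your sketch lacks is precisely this: after logarithmic differentiation, a nonconstant $h$ enters on the same footing as the poles, as additional simple-pole terms $v_i\gamma_i/(1+\gamma_i r)$, so the high-order vanishing of $Q[R]$ at infinity --- the ramification you correctly identified --- can still be converted into linear constraints that force many distinct zeros, independently of $\deg h$. Without a substitute for this step, the case $\deg h\ge 1$ remains open in your proposal.
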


\begin{proof}
Let 
	\begin{equation}\label{eq:l1.2}
		h(z)=B\prod\limits_{i=1}^{v}(z+ \gamma_i)^{v_i},
	\end{equation}
	where $B$ is a non-zero constant and $\gamma_i$ ($1\leq i\leq v$) are distinct complex numbers.
	
	From \eqref{eq:l1.1}, we deduce that 
	\begin{equation}\label{eq:l1.3}
		Q[R](z)=\frac{A\prod\limits_{i=1}^{s}\left(z+\alpha_i\right)^{(n_i+1)d-w}}{\prod\limits_{i=1}^{t}\left(z+\beta_i\right)^{(m_i-1)d+w}}\cdot G_R(z),
	\end{equation}
	where  $G_R$ is a polynomial of degree at most $(s+t-1)(w-d).$
	
	Also, it is easy to see that $Q[R]-h$ has at least one zero in $\mathbb{C}.$ Thus we may set
	\begin{equation}\label{eq:l1.4}
		Q[R](z)=h(z)+\frac{C\prod\limits_{i=1}^{q}(z+\delta_i)^{q_i}}{\prod_{i=1}^{t}(z+\beta_i)^{(m_i-1)d+w}},
	\end{equation}
	where $C\in\mathbb{C}\setminus\left\{0\right\},$ $q_i$ are positive integers and $\delta_i$ ($1\leq i\leq q$) are distinct complex numbers.
	
	Now, from \eqref{eq:l1.2}, \eqref{eq:l1.3} and \eqref{eq:l1.4}, we get
	\begin{equation}\label{eq:l1.5}
		B\prod\limits_{i=1}^{v}(z+\gamma_i)^{v_i}\prod_{i=1}^{t}(z+\beta_i)^{(m_i-1)d+w} - A\prod\limits_{i=1}^{s}\left(z+\alpha_i\right)^{(n_i+1)d-w}G_R(z)=-C\prod\limits_{i=1}^{q}(z+\delta_i)^{q_i}
	\end{equation}
 Let $V$ be the degree of $h.$ Then, since $\sum\limits_{i=1}^{t}m_i \geq \sum\limits_{i=1}^{s}n_i,$ we have
\begin{align}\label{eq:l1.a}
V &> (s-1)(w-d)-s(w-d)\nonumber\\
&= (s+t-1)(w-d)-t(w-d)-s(w-d)\nonumber\\
\Rightarrow V+t(w-d)&> (s+t-1)(w-d)-s(w-d)\nonumber\\
\Rightarrow V+t(w-d)+\sum\limits_{i=1}^{t}m_i &> \sum\limits_{i=1}^{s}n_i+(s+t-1)(w-d)-s(w-d)\nonumber\\
\Rightarrow \sum\limits_{i=1}^{v}v_i+\sum\limits_{i=1}^{t}[(m_i-1)d+w] &>
 \sum\limits_{i=1}^{s}[(n_i+1)d-w] + (s+t-1)(w-d).
\end{align}
From \eqref{eq:l1.5} and \eqref{eq:l1.a}, we get $$\sum\limits_{i=1}^{q}q_i=\sum\limits_{i=1}^{t}(m_i-1)d+tw+V \mbox{ and } B=-C.$$
	
	Also, from \eqref{eq:l1.5}, we get
\begin{equation}\label{eq:Chang1}
\prod\limits_{i=1}^{v}(z+\gamma_i)^{v_i}\prod_{i=1}^{t}(z+\beta_i)^{(m_i-1)d+w}-\prod\limits_{i=1}^{q}(z+\delta_i)^{q_i}=\frac{A}{B}\prod\limits_{i=1}^{s}\left(z+\alpha_i\right)^{(n_i+1)d-w}G_R(z).
\end{equation}
 Put $r=1/z$ in \eqref{eq:Chang1}, we get
\begin{equation}\label{eq:A}
\prod\limits_{i=1}^{v}(1+\gamma_ir)^{v_i}\prod_{i=1}^{t}(1+\beta_ir)^{(m_i-1)d+w}-\prod\limits_{i=1}^{q}(1+\delta_ir)^{q_i}=\frac{A}{B}z^T\prod\limits_{i=1}^{s}\left(1+\alpha_ir\right)^{(n_i+1)d-w}G_R(r), 
	\end{equation}
where $T=\sum\limits_{i=1}^{s}\left((n_j+1)d-w\right)+\mbox{deg}(G_R)-\left[\sum\limits_{i=1}^{t}\left((m_i-1)d+w\right)+V\right].$\\
Put $b(r):
=(A/B)r^{(s+t-1)(w-d)}G_R(1/r).$ Then from $(\ref{eq:A}),$ we have 
\begin{equation*}\frac{\prod\limits_{i=1}^{v}(1+\gamma_ir)^{v_i}\prod_{i=1}^{t}(1+\beta_ir)^{(m_i-1)d+w}}{\prod\limits_{i=1}^{q}(1+\delta_ir)^{q_i}}=r^{(M-N)d+(w-d)+V}\cdot\frac{b(r)\prod\limits_{i=1}^{s}(1+\alpha_ir)^{(n_i+1)d-w}}{\prod\limits_{i=1}^{q}(1+\delta_ir)^{q_i}}+1,
\end{equation*} where $M=\sum\limits_{i=1}^{t}m_i$ and $N=\sum\limits_{i=1}^{s}n_i.$  
Thus we have 
\begin{equation}\label{eq:Chang2}
\frac{\prod\limits_{i=1}^{v}(1+\gamma_ir)^{v_i}\prod_{i=1}^{t}(1+\beta_ir)^{(m_i-1)d+w}}{\prod\limits_{i=1}^{q}(1+\delta_ir)^{q_i}}=1+O\left(r^{(M-N)d+(w-d)+V}\right) \mbox{ as } r\rightarrow 0.
\end{equation}
 %%%%%%%%%%%%%%%%%%%%%%%%%%%%%%%%%%%
	%$$\prod\limits_{i=1}^{d}%(1+u_ir)^{d_i}\prod_{i=1}^{p}%(1+z_ir)^{l_i(p_0+p')+q'} - %\prod\limits_{i=1}^{q}(1+w_ir)^{m_i}=r^{L(p_0+p')+q'+ D}b(r),$$ %where $b(r):=r^{q'(p-1)}G_Q(1/r)/C_2$ is a polynomial such that deg $b(r)\leq q'(p-1).$ Furthermore, it follows that
	%\begin{align}\label{eq:l1.6}
		%\frac{\prod\limits_{i=1}^{d}(1+u_ir)^{d_i}\prod\limits_{i=1}^{p}(1+z_ir)^{l_i(p_0+p')+q'}}{\prod\limits_{i=1}^{q}(1+w_ir)^{m_i}}&=1+\frac{r^{L(p_0+p')+q'+ D}b(r)}{\prod\limits_{i=1}^{q}(1+w_ir)^{m_i}}\nonumber\\ 
		%&=1+ O\left(r^{L(p_0+p')+q'+ D}\right) 
	%\end{align}
	 Taking logarithmic derivatives of both sides of \eqref{eq:Chang2}, we obtain
	\begin{equation}\label{eq:Chang3}
		\sum\limits_{i=1}^{v}\frac{v_i\gamma_i}{1+\gamma_ir}+ \sum\limits_{i=1}^{t}\left[(m_i-1)d+w\right]\frac{\beta_i}{1+\beta_ir}-\sum\limits_{i=1}^{q}\frac{q_i\delta_i}{1+\delta_ir}=O\left(r^{(M-N-1)d+w+V-1}\right) \mbox{ as } r\rightarrow 0.
	\end{equation}
Let $E_1=\left\{\gamma_1,\gamma_2,\ldots,\gamma_v\right\}\cap\left\{\beta_1,\beta_2,\ldots,\beta_t\right\}$ and $E_2=\left\{\gamma_1,~\gamma_2,\ldots,\gamma_v\right\}\cap\left\{\delta_1,\delta_2,\ldots,\delta_q\right\}.$ Then the following four cases arise:\\
{\bf  Case 1:} $E_1=E_2=\emptyset$.\\
Let $\beta_{t+i}=\gamma_i$ when $1\leq i\leq v$ and $$T_i=\left\{\begin{array}{cc} (m_i-1)d+w & \mbox{if}~ 1\leq i\leq t,\\ v_{i-t} & \mbox{if}~ t+1\leq i\leq t+v.\end{array}\right.$$ Then \eqref{eq:Chang3} can be written as 
	\begin{equation}\label{eq:l1.8}
		\sum\limits_{i=1}^{t+v}\frac{T_i\beta_i}{1+\beta_ir}-\sum\limits_{i=1}^{q}\frac{q_i\delta_i}{1+\delta_ir}=O\left(r^{M-N-1)d+w+V-1}\right)~\mbox{as $r\rightarrow 0.$}
	\end{equation}
	Comparing the coefficients of $r^j,~j=0,1,\ldots, (M-N-1)d+w+V-2$ in \eqref{eq:l1.8}, we find that 
	\begin{equation}\label{eq:l1.9}
		\sum\limits_{i=1}^{t+v}T_i\beta_i^j-\sum\limits_{i=1}^{q}q_i\delta_i^j=0,~\mbox{for each}~ j=1,2,\ldots, (M-N-1)d+w+V-1.
	\end{equation}
	Now, let $\beta_{t+v+i}=\delta_i$ for $1\leq i\leq q.$ Then from \eqref{eq:l1.9}, we deduce that the system of equations
	\begin{equation}\label{eq:l1.10}
		\sum\limits_{i=1}^{t+v+q}\beta_i^jx_i=0,~j=0,1,\ldots, (M-N-1)d+w+V-1,
	\end{equation}
	has a non-zero solution $$\left(x_1,\ldots,x_{t+v},x_{t+v+1},\ldots,x_{t+v+q}\right)=\left(T_1,\ldots,T_{t+v},-q_1,\ldots,-q_q\right).$$
	This happens only if the rank of the coefficient matrix of the system \eqref{eq:l1.10} is strictly less than $t+v+q.$ Hence $(M-N-1)d+w+V< t+v+q.$ Since $V=\sum\limits_{i=1}^{v}v_i\geq v$ and $M-N\geq t,$ it follows that $q\geq w.$ \\
{\bf  Case 2:} $E_1\neq\emptyset$ and $E_2=\emptyset.$\\
We may assume, without loss of generality, that $E_1=\left\{\gamma_1,\gamma_2,\ldots,\gamma_{s_1}\right\}.$ Then $\gamma_i=\beta_i$ for $1\leq i\leq s_1.$ Take $s_3=v-s_1.$\\
{\bf  Subcase 2.1:} When $s_3\geq 1.$\\
Let $\beta_{t+i}=\gamma_{s_1+i}$ for $1\leq i\leq s_3.$ If $s_1<t,$ then let $$T_i=\left\{\begin{array}{ccc} (m_i-1)d+w+v_i & \mbox{if } 1\leq i\leq s_1,\\  (m_i-1)d+w & \mbox{if } s_1+1\leq i\leq t,\\  v_{s_1-t+i} & \mbox{if } t+1\leq i\leq t+s_3.\end{array}\right.$$ If $s_1=t,$ then we take $$T_i=\left\{\begin{array}{cc} (m_i-1)d+w+v_i & \mbox{if } 1\leq i\leq s_1,\\ v_{s_1-t+i} & \mbox{if } t+1\leq i\leq t+s_3.\end{array}\right.$$
{\bf  Subcase 2.2:} When $s_3=0.$\\
If $s_1<t,$ then set $$T_i=\left\{\begin{array}{cc} (m_i-1)d+w+v_i & \mbox{if } 1\leq i\leq s_1,\\ (m_i-1)d+w & \mbox{if } s_1+1\leq i\leq t\end{array}\right.$$ and if $s_1=t,$ then set $T_i=(m_i-1)d+w+v_i,~\mbox{for } 1\leq i\leq s_1=t.$ 
	
	Thus \eqref{eq:Chang3} can be written as: $$\sum\limits_{i=1}^{t+s_3}\frac{
 T_i\beta_i}{1+\beta_ir}-\sum\limits_{i=1}^{q}\frac{m_i\delta_i}{1+\delta_ir}=O\left(r^{(M-N-1)d+w+V-1}\right)~\mbox{ as }r\rightarrow 0,$$ where $0\leq s_3\leq v-1.$ Proceeding in similar fashion as in Case 1, we deduce that $q\geq w.$\\
{\bf  Case 3:} $E_1=\emptyset$ and $E_2\neq\emptyset.$\\
We may assume, without loss of generality, that $E_2=\left\{\gamma_1,\gamma_2,\ldots,\gamma_{s_2}\right\}.$ Then $\gamma_i=\delta_i$ for $1\leq i\leq s_2.$ Take $s_4=v-s_2.$\\
{\bf  Subcase 3.1:} When $s_4\geq 1.$\\
Let $\delta_{q+i}=\gamma_{s_2+i}$ for $1\leq i\leq s_4.$ If $s_2<q,$ then set $$Q_i=\left\{\begin{array}{ccc} q_i-v_i & \mbox{if } 1\leq i\leq s_2,\\  q_i & \mbox{if } s_2+1\leq i\leq q,\\  -v_{s_2-q+i} & \mbox{if } q+1\leq i\leq q+s_4.\end{array}\right.$$ If $s_2=q,$ then set $$Q_i=\left\{\begin{array}{cc} q_i-v_i & \mbox{if } 1\leq i\leq s_2,\\ -v_{s_2-q+i} & \mbox{if } q+1\leq i\leq q+s_4.\end{array}\right.$$
{\bf  Subcase 3.2:} When $s_4=0.$\\
If $s_2<q,$ then set $$Q_i=\left\{\begin{array}{cc}q_i-v_i & \mbox{if } 1\leq i\leq s_2,\\ q_i & \mbox{if } s_2+1\leq i\leq q\end{array}\right.$$ and if $s_2=q,$ then set $Q_i=q_i-v_i,~\mbox{for } 1\leq i\leq s_2=q.$ 
	
	Thus \eqref{eq:Chang3} can be written as: $$\sum\limits_{i=1}^{t}\frac{\left[(m_i-1)d+w\right]\beta_i}{1+\beta_ir}-\sum\limits_{i=1}^{q+s_4}\frac{Q_i\delta_i}{1+\delta_ir}=O\left(r^{(M-N-1)d+w+V-1}\right)~\mbox{ as }r\rightarrow 0,$$ where $0\leq s_4\leq v-1.$ Proceeding in similar way as in Case 1, we deduce that $q\geq w.$\\
{\bf  Case 4.} $E_1\neq\emptyset$ and $E_2\neq\emptyset.$\\
We may assume, without loss of generality, that $E_1=\left\{\gamma_1,\gamma_2,\ldots,\gamma_{s_1}\right\},~E_2=\left\{\delta_1,\delta_2,\ldots,\delta_{s_2}\right\}.$ Then $\gamma_i=\beta_i$ for $1\leq i\leq s_1$ and $\delta_i=\gamma_{s_1+i}$ for $1\leq i\leq s_2.$ Take $s_5=v-s_2-s_1.$\\
{\bf  Subcase 4.1:} When $s_5\geq 1.$\\
Let $\beta_{t+i}=\gamma_{s_1+s_2+i}$ for $1\leq i\leq s_5.$ If $s_1<t,$ then set $$T_i=\left\{\begin{array}{ccc} (m_i-1)d+w+v_i & \mbox{if } 1\leq i\leq s_1,\\  (m_i-1)d+w & \mbox{if } s_1+1\leq i\leq t,\\  v_{s_1+s_2-t+i} & \mbox{if } t+1\leq i\leq t+s_5.\end{array}\right.$$ If $s_1=t,$ then set $$T_i=\left\{\begin{array}{cc}(m_i-1)d+w+v_i & \mbox{if } 1\leq i\leq s_1,\\ v_{s_1+s_2-t+i} & \mbox{if } t+1\leq i\leq t+s_5.\end{array}\right.$$ If $s_2<q,$ then set $$Q_i=\left\{\begin{array}{cc}q_i-v_{s_1+i} & \mbox{if } 1\leq i\leq s_2,\\ q_i & \mbox{if } s_2+1\leq i\leq q\end{array}\right.$$ and if $s_2=q,$ then set $Q_i=q_i-v_{s_1+i},~\mbox{for } 1\leq i\leq s_2.$\\
{\bf  Subcase 4.2:} When $s_5=0.$\\
If $s_1<t,$ then set $$T_i=\left\{\begin{array}{cc}(m_i-1)d+w+v_i & \mbox{if } 1\leq i\leq s_1,\\ (m_i-1)d+w & \mbox{if } s_1+1\leq i\leq t.\end{array}\right.$$ If $s_1=t,$ then set $T_i=(m_i-1)d+w+v_i~\mbox{for } 1\leq i\leq s_1.$ 
	
Also, if $s_2<q,$ then set $$Q_i=\left\{\begin{array}{cc}q_i-v_{s_1+i} & \mbox{if } 1\leq i\leq s_2,\\ q_i & \mbox{if } s_2+1\leq i\leq q\end{array}\right.$$ and if $s_2=q,$ then set $Q_i=q_i-v_{s_1+i},~\mbox{for } 1\leq i\leq s_2.$
	
Thus in both sub cases, \eqref{eq:Chang3} can be written as $$\sum\limits_{i=1}^{t+s_5}\frac{T_i\beta_i}{1+\beta_ir}-\sum\limits_{i=1}^{q}\frac{Q_i\delta_i}{1+\delta_ir}=O\left(r^{(M-N-1)d+w+V-1}\right)~\mbox{ as }r\rightarrow 0,$$ where $0\leq s_5\leq v-2.$ Using the same arguments as in Case 1, we deduce that $q\geq w.$ This completes the proof.
\end{proof}

\begin{lemma}\label{lem:nikhil}
	Let $f$ be a non-constant non-vanishing rational function, all of whose poles are multiple. Let $Q[f]$ be a differential polynomial of $f$ as defined in \eqref{eqn:3} having weight $w.$ Then, for any $a\in\mC,$ $Q[f](z)-1/(z+a)$ has at least $w$ distinct zeros in $\mathbb{C}.$  
\end{lemma}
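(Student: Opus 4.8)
The plan is to run the Chang-type zero-counting argument of Lemma \ref{lem:2}, but to exploit the hypotheses on $f$ to compensate for the fact that the subtracted function $1/(z+a)$ is no longer a polynomial. Since $f$ is rational, non-constant and non-vanishing, its numerator is a nonzero constant, so I may write
$$f(z)=\frac{A}{\prod_{i=1}^{t}(z+\beta_i)^{m_i}},\qquad A\in\mC\setminus\{0\},\ t\ge 1,$$
with distinct $\beta_i$; thus in the notation of Lemma \ref{lem:2} we have $N=0$ and $s=0$, and because every pole is multiple, $m_i\ge 2$, whence $M:=\sum_{i}m_i\ge 2t$. As in \eqref{eq:l1.3}, a direct Leibniz computation shows that $Q[f]$ has a pole of order exactly $(m_i-1)d+w$ at each $-\beta_i$ and no other pole, so
$$Q[f](z)=\frac{c\,G(z)}{D(z)},\qquad D(z):=\prod_{i=1}^{t}(z+\beta_i)^{(m_i-1)d+w},$$
where $c\neq 0$, $G$ is a polynomial with $\deg G\le (t-1)(w-d)$ and $G(-\beta_i)\neq 0$. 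In particular $Q[f]$ is non-constant.

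Next I would split according to whether $a$ is a pole of $f$. If $a\notin\{\beta_1,\dots,\beta_t\}$, then
$$Q[f](z)-\frac{1}{z+a}=\frac{c(z+a)G(z)-D(z)}{(z+a)\,D(z)}=:\frac{\mathcal N(z)}{(z+a)D(z)},$$
and since $\deg\!\big(c(z+a)G\big)<\deg D$, one gets $\deg\mathcal N=\deg D=(M-t)d+tw$; moreover neither $-a$ nor any $-\beta_i$ is a root of $\mathcal N$, so the distinct zeros of $Q[f]-1/(z+a)$ are exactly the distinct roots of $\mathcal N$. Writing $\mathcal N(z)=C\prod_{i=1}^{q}(z+\delta_i)^{q_i}$ with distinct $\delta_i$ and $\sum q_i=\deg D$, comparison of leading coefficients forces $C=-1$, so that $D(z)-\prod_{i=1}^{q}(z+\delta_i)^{q_i}=c(z+a)G(z)$. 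If instead $a=\beta_1$, the simple pole of $1/(z+a)$ is absorbed into the pole of $Q[f]$ at $-\beta_1$; replacing $D$ by $D_1:=D/(z+\beta_1)$ in the numerator leads to the same identity with $\deg\mathcal N=\deg D-1$ and with the exponent of $(z+\beta_1)$ in the denominator lowered by one. In both cases the $t+q$ nodes $\beta_1,\dots,\beta_t,\delta_1,\dots,\delta_q$ are pairwise distinct, because $-\beta_i$ are poles and $-\delta_i$ zeros of $Q[f]-1/(z+a)$.

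I would then substitute $r=1/z$ and cancel the common power of $z$ (the two leading polynomials have equal degree), which, exactly as in the passage \eqref{eq:Chang1}$\to$\eqref{eq:Chang2}, yields
$$\frac{\prod_{i=1}^{t}(1+\beta_i r)^{(m_i-1)d+w}}{\prod_{i=1}^{q}(1+\delta_i r)^{q_i}}=1+O\!\left(r^{(M-1)d+w-1}\right)\quad\text{as }r\to 0$$
(in the case $a=\beta_1$ the exponent of $(1+\beta_1 r)$ drops by one, and the order of the error term is unchanged). Taking logarithmic derivatives and comparing coefficients as in \eqref{eq:l1.9}--\eqref{eq:l1.10} produces a homogeneous Vandermonde system of $(M-1)d+w-1$ equations in the $t+q$ distinct nodes, admitting the nonzero solution formed by the pole orders and the $-q_i$. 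By invertibility of Vandermonde matrices this forces the number of equations to be strictly smaller than the number of unknowns, i.e. $(M-1)d+w-1<t+q$. Finally, using $M\ge 2t$, $d\ge 1$ and $t\ge 1$, I get $q>(M-1)d+w-1-t\ge(2t-1)+w-1-t=t+w-2\ge w-1$, that is $q\ge w$, as claimed.

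I expect the main obstacle to be exactly this final numerical balance. In Lemma \ref{lem:2} the polynomial $h$ has degree $V\ge v$, and it is this inequality together with $M-N\ge t$ that produces $q\ge w$; here $1/(z+a)$ behaves like a factor of negative degree, so that slack is lost and must be recovered entirely from the stronger hypotheses that $f$ is non-vanishing (giving $N=0$) and has only multiple poles (giving $M\ge 2t$). The delicate point is that the pole of $1/(z+a)$ shifts the order of vanishing in the above expansion by \emph{exactly} one, so the final estimate closes at $q\ge w$ rather than $q\ge w-1$; keeping this shift exact is precisely why the case $a=\beta_i$ has to be isolated and why the argument does not merely reproduce Lemma \ref{lem:2} with a sign change.
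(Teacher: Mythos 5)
Your proposal is correct and coincides with the paper's intended argument: the paper omits the proof of this lemma, stating only that it can be derived from the proof of Lemma \ref{lem:2} with simple modifications, and your two-case adaptation --- writing $f=A/\prod_{i=1}^{t}(z+\beta_i)^{m_i}$ (so $s=0$, $N=0$), absorbing the simple pole of $1/(z+a)$ into the denominator (with the case $a=\beta_i$ treated separately), and recovering the unit of degree lost from $V=-1$ via the multiple-pole hypothesis $M\ge 2t$ --- is exactly that modification carried out in detail. The resulting Vandermonde count $(M-1)d+w-1<t+q$ combined with $M\ge 2t$, $d\ge 1$, $t\ge 1$ does yield $q\ge t+w-1\ge w$, so the argument closes as claimed.
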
	

\begin{proof}
The proof can be derived from the proof of Lemma \ref{lem:2} with simple modifications and hence omitted.
\end{proof}

\begin{lemma}\label{lem:3}
Let $\left\{f_j\right\}\subset\mathcal{M}(D)$ be a sequence of non-vanishing functions, all of whose poles are multiple and let $\left\{h_j\right\}\subset\mathcal{H}(D)$ be such that $h_j\rightarrow h$ locally uniformly in $D,$ where $h\in\cH(D)$ and $h(z)\neq 0$ in $D.$ If, for every j, $Q[f_j]-h_j$ has at most $w-1$ zeros, ignoring multiplicities, in $D,$ then $\left\{f_j\right\}$ is normal in $D.$ 
\end{lemma}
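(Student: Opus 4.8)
The plan is to argue by contradiction using the rescaling machinery of Lemma \ref{lem:zp} together with the zero-counting Lemmas \ref{lem:nik} and \ref{lem:2}. Since normality is a local property, it suffices to prove that $\{f_j\}$ is normal at each point $z_0\in D$. Suppose, to the contrary, that $\{f_j\}$ fails to be normal at $z_0$. Because every $f_j$ is non-vanishing and all of its poles are multiple, the functions have no zeros and each pole has multiplicity at least $p=2$; hence, setting
\[
\alpha:=\frac{w-d}{d}=\frac{q'}{d}>0,
\]
which is well defined since $q'>0$ forces $d\geq 1$, the value $\alpha$ lies in the admissible range $(-p,+\infty)$ of Lemma \ref{lem:zp}. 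Applying that lemma (after passing to the subsequence realizing the non-normality and keeping the corresponding $h_j$), we obtain $\{f_j\}$, points $z_j\to z_0$ and $\rho_j\to 0^+$ such that $g_j(\zeta):=\rho_j^{-\alpha}f_j(z_j+\rho_j\zeta)\to g(\zeta)$ locally uniformly in $\mC$ with respect to the spherical metric, where $g$ is a non-constant meromorphic function on $\mC$.

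The decisive computation is the behaviour of $Q$ under this rescaling. Writing $\tfrac{d}{dz}=\rho_j^{-1}\tfrac{d}{d\zeta}$ in each factor of \eqref{eqn:3} and using $f_j(z_j+\rho_j\zeta)=\rho_j^{\alpha}g_j(\zeta)$, one finds
\[
Q[f_j](z_j+\rho_j\zeta)=\rho_j^{(\alpha+1)d-w}\,Q[g_j](\zeta)=Q[g_j](\zeta),
\]
the exponent vanishing precisely by the choice of $\alpha$. Next I would verify that $g$ inherits the structural hypotheses needed downstream: by Hurwitz's theorem $g$ is non-vanishing (the $g_j$ have no zeros and $g$ is non-constant), and, applying Hurwitz to the reciprocals $1/g_j\to 1/g$, every pole of $g$ is a limit of poles of the $g_j$, each of multiplicity $\geq 2$, so all poles of $g$ are multiple. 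Finally, since $h_j\to h$ locally uniformly and $z_j+\rho_j\zeta\to z_0$, we get $h_j(z_j+\rho_j\zeta)\to h(z_0)=:c$ locally uniformly in $\zeta$, where $c$ is a finite, \emph{nonzero} constant because $h(z)\neq 0$ in $D$. Combining these, $Q[f_j](z_j+\rho_j\zeta)-h_j(z_j+\rho_j\zeta)\to Q[g](\zeta)-c$ locally uniformly away from the poles of $g$.

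It remains to count the zeros of $Q[g]-c$ and to transfer them back. If $g$ is transcendental, then $c\;(\not\equiv 0)$ is a small function of $g$ and Lemma \ref{lem:nik} gives infinitely many zeros of $Q[g]-c$. If $g$ is rational, then being non-constant, non-vanishing with only multiple poles it has the form $g(z)=A/\prod_{i=1}^{t}(z+\beta_i)^{m_i}$ with $N=0$ and $M=\sum_{i=1}^{t} m_i\geq 2t\geq t$, so the hypothesis $M-N\geq t$ of Lemma \ref{lem:2} holds; taking the polynomial $h\equiv c$ there yields at least $w$ distinct zeros of $Q[g]-c$. In either case $Q[g]-c$ has at least $w$ distinct zeros $\zeta_1,\dots,\zeta_w$, none of which is a pole of $g$. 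By Hurwitz's theorem, for all large $j$ the function $Q[f_j](z_j+\rho_j\zeta)-h_j(z_j+\rho_j\zeta)$ has a zero near each $\zeta_i$; as the $\zeta_i$ are distinct and $\rho_j\to 0$, these correspond to at least $w$ distinct zeros of $Q[f_j]-h_j$ in $D$, contradicting the hypothesis that $Q[f_j]-h_j$ has at most $w-1$ zeros. This contradiction establishes normality. I expect the crux to be the second paragraph: pinning down the exponent $\alpha$ so that $Q$ rescales without blow-up, and checking that the limit $g$ retains the non-vanishing and multiple-pole structure so that Lemmas \ref{lem:nik} and \ref{lem:2} apply; the Hurwitz transfer is then routine.
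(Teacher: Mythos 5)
Your argument follows the paper's own proof almost step for step: the same Zalcman--Pang rescaling with exponent $\alpha=(w-d)/d$, the same identity $Q[f_j](z_j+\rho_j\zeta)=Q[g_j](\zeta)$, the same limit $Q[g](\zeta)-h(z_0)$, and the same appeal to Lemma \ref{lem:nik} (transcendental case) and Lemma \ref{lem:2} (rational case) followed by a Hurwitz transfer back to $Q[f_j]-h_j$; your verification of the hypothesis $M-N\geq t$ of Lemma \ref{lem:2} is in fact more explicit than the paper's.

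However, there is one genuine gap: you never rule out the degenerate case $Q[g]-c\equiv 0$. Hurwitz's theorem transfers zeros of a locally uniform limit only when the limit function does not vanish identically, so if $Q[g]\equiv c$ your final step collapses: the $w$ ``distinct zeros'' are then not isolated zeros of the limit, and no zeros of $Q[f_j]-h_j$ need be produced at all. (Lemma \ref{lem:nik} is no help here: if $Q[g]\equiv c$, its conclusion ``infinitely many zeros'' is trivially satisfied by every point of $\mC$.) In the rational case the degeneracy is automatically excluded --- a non-constant zero-free rational $g$ must have a pole, and since $q'>0$ and $g$ is zero-free, every pole of $g$ is a pole of $Q[g]$, so $Q[g]$ cannot be constant --- but in the transcendental case $g$ may a priori be zero-free entire (e.g.\ of the form $e^{\psi}$), and excluding $Q[g]\equiv c$ there requires an actual argument. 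The paper supplies exactly this missing step by citing Grahl's result \cite[Theorem 7]{grahl}: since the rescaled limit is non-constant, so is $Q$ of it, hence $Q[g]-h(z_0)\not\equiv 0$ and Hurwitz applies. Adding that citation (or an equivalent Nevanlinna-theoretic argument for zero-free entire $g$) closes the gap; everything else in your proof is correct.
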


\begin{proof}
Suppose that $\left\{f_j\right\}$ is not normal at $z_0\in D.$ Then by Lemma \ref{lem:zp}, there exist a sequence of points $\left\{z_j\right\}\subset D$ with $z_j\rightarrow z_0,$ a sequence of positive real numbers satisfying $\rho_j\rightarrow 0,$ and a subsequence of $\left\{f_j\right\},$ again denoted by $\left\{f_j\right\}$ such that the sequence $$F_j(\zeta):=\frac{f_j(z_j+\rho_j\zeta)}{\rho_j^{(w-d)/d}}\rightarrow F(\zeta),$$ spherically locally uniformly in $\mathbb{C},$ where $F$ is a non-constant and non-vanishing meromorphic function, all of whose poles are multiple. Clearly, $Q[F_j]\rightarrow Q[F]$ spherically uniformly in every compact subset of $\mC$ disjoint from poles of $F.$ Also, one can easily see that $Q[F_j](\zeta)=Q[f_j](z_j+\rho_j\zeta).$ Thus, $$Q[f_j](z_j+\rho_j\zeta)-h_j(z_j+\rho_j\zeta)=Q[F_j](\zeta)-h_j(z_j+\rho_j\zeta)\rightarrow Q[F](\zeta)-h(z_0)$$ spherically locally uniformly in every compact subset of $\mC$ disjoint from the poles of $F.$ Since $F$ is non-constant, by a result of Grahl \cite[Theorem 7]{grahl}, we find that $Q[F]$ is non-constant. Next, we claim that $Q[F]-h(z_0)$ has at most $w-1$ distinct zeros in $\mC.$ 

Suppose that $Q[F]-h(z_0)$ has $w$ distinct zeros, say $\zeta_i,~i=1,~2,~\ldots,~w.$ Then by Hurwitz's theorem, there exist sequences $\zeta_{j, i},~i=1,~2,~\ldots,~w$ with $\zeta_{j, i}\rightarrow\zeta_i$ such that for sufficiently large $j,$ $Q[f_j](z_j+\rho_j\zeta_{j,i})=h_j(z_j+\rho_j\zeta_{j,i})$ for $i=1,~2,~\ldots,~w.$ However, this is not possible since $Q[f_j]-h_j$ has at most $w-1$ distinct zeros in $D.$ This establishes the claim. Now, from Lemma \ref{lem:nik}, it follows that $F$ is a rational function which contradicts Lemma \ref{lem:2}.
\end{proof}

\section{Proof of Theorem \ref{thm:7}}

In view of Lemma \ref{lem:3}, it suffices to prove that $\mathcal{F}$ is normal at points at which $h$ has poles. Since normality is a local property, we can assume $D$ to be $\mD.$ By making standard normalization, it can be assumed that $$h(z)=\frac{1}{z}+a_0+a_1z+\cdots=\frac{b(z)}{z},$$ where $b(0)=1$ and $b(z)\neq 0,~\infty$ in $\mD\setminus\left\{0\right\}.$ Next, we only need to show that $\cF$ is normal at $z=0$. Suppose, on the contrary, that $\cF$ is not normal at $z=0.$ Then by Lemma \ref{lem:zp}, there exists a subsequence  $\left\{f_j\right\}\subset\cF,$ a sequence of points $\left\{z_j\right\}\subset\mD$ with $z_j\rightarrow 0$ and a sequence of positive real numbers $\rho_j$ satisfying $\rho_j\rightarrow 0$ such that the sequence $$g_j(\zeta):=\frac{f_j(z_j+\rho_j\zeta)}{\rho_j^{(w-d-1)/d}}\rightarrow g(\zeta)$$ spherically locally uniformly in $\mathbb{C},$ where $g\in\mathcal{M}(\mC)$ is a non-constant function, all of whose poles are multiple. Also, since each $f_j$ is non-vanishing, it follows that $g$ is non-vanishing.
We now distinguish two cases.\\
{\bf Case 1:} Suppose that there exists a subsequence of $z_j/\rho_j,$ again denoted by $z_j/\rho_j,$ such that $z_j/\rho_j\rightarrow\infty$ as $j\rightarrow\infty.$

Define $$F_j(\zeta):=z_j^{-(w-d-1)/d}f_j(z_j+z_j\zeta).$$ Then a simple computation shows that 
$$Q[F_j](\zeta)=z_jQ[f_j](z_j+z_j\zeta).$$
Also, $F_j$ is non-vanishing, all poles of $F_j$ are multiple, and $b(z_j+z_j\zeta)/(1+\zeta)\rightarrow 1/(1+\zeta)\neq 0$ in $\mD.$ Therefore, 
\begin{align*}
Q[F_j](\zeta)-\frac{b(z_j+z_j\zeta)}{1+\zeta} =z_jQ[f_j](z_j+z_j\zeta)-\frac{b(z_j+z_j\zeta)}{1+\zeta}&=z_j\left[Q[f_j](z_j(1+\zeta))-\frac{b(z_j(1+\zeta))}{z_j(1+\zeta)}\right]\\
&=z_j\left(Q[f_j](z_j(1+\zeta))-h(z_j(1+\zeta))\right)
\end{align*}

Since $Q[f_j]-h$ has at most $w-1$ zeros in $\mD,$ by Lemma \ref{lem:3}, it follows that $\left\{F_j\right\}$ is normal in $\mD$ and so there exists a subsequence of $\left\{F_j\right\},$ again denoted by $\left\{F_j\right\},$ such that $F_j\rightarrow F$ spherically locally uniformly in $\mD,$ where $F\in\cM(\mD)$ or $F\equiv\infty.$  If $F(0)=\infty,$ then 
\begin{align*}
 g_j(\zeta)&=\frac{f_j(z_j+\rho_j\zeta)}{\rho_j^{(w-d-1)/d}}=\left(\frac{z_j}{\rho_j}\right)^{(w-d-1)/d}z_j^{-(w-d-1)/d}f_j(z_j+\rho_j\zeta)=\left(\frac{z_j}{\rho_j}\right)^{(w-d-1)/d}F_j\left(\frac{\rho_j}{z_j}\zeta\right) 
\end{align*}
converges spherically locally uniformly to $\infty$ in $\mC$ showing that $F\equiv\infty,$ a contradiction to the fact that $F$ is non-constant.\\ 
If $F(0)\neq\infty,$ then choose $m\in\mN$ such that $m>(w-d-1)/d$ and let $m_0=m-(w-d-1)/d.$ Then $m_0>0.$ Thus, for each $\zeta\in\mC,$ we have 
\begin{align*}
g_j^{(m)}(\zeta)=\rho_{j}^{m_0}f_j^{(m)}(z_j+\rho_j\zeta)=\left(\frac{\rho_j}{z_j}\right)^{m_0}F_j^{(m)}\left(\frac{\rho_j}{z_j}\zeta\right)\rightarrow 0 \mbox{ as } j\rightarrow\infty,
\end{align*}
which shows that $g$ is a polynomial of degree at most $m-1,$ a contradiction to the fact that $g$ is non-constant and non-vanishing meromorphic function.\\
{\bf Case 2:} Suppose that there exists a subsequence of $z_j/\rho_j,$ again denoted by $z_j/\rho_j,$ such that $z_j/\rho_j\rightarrow\alpha$ as $j\rightarrow\infty,$ where $\alpha\in\mathbb{C}.$ Then 
\begin{align*}
Q[g_j](\zeta)-\rho_j \frac{b(z_j+\rho_j\zeta)}{z_j+\rho_j\zeta}=\rho_j\left(Q[f_j](z_j+\rho_j\zeta)-\frac{b(z_j+\rho_j\zeta)}{z_j+\rho_j\zeta}\right)\rightarrow Q[g](\zeta)-\frac{1}{\zeta+\alpha}
\end{align*}
spherically uniformly on compact subsets of $\mC\setminus\left\{-\alpha\right\}$ disjoint from the poles of $g.$ Since $g$ is non-constant, it can be easily seen that $Q[g]$ is also non-constant. Next, we claim that $Q[g](\zeta)-1/(\zeta+\alpha)$ has at most $w-1$ distinct zeros.\\
Suppose, on the contrary, that $Q[g](\zeta)-1/(\zeta+\alpha)$ has $w$ distinct zeros, say $\zeta_i,~i=1,~2,~\ldots,~w.$ Since $Q[g](\zeta)- 1/(\zeta+\alpha)\not\equiv 0,$ by Hurwitz's theorem, there exist sequences $\zeta_{j, i},~i=1,~2,~\ldots,~w$ with $\zeta_{j, i}\rightarrow\zeta_i$ such that, for sufficiently large $j,$ $$Q[f_j](z_j+\rho_j\zeta_{j,i})-\frac{b(z_j+\rho_j\zeta_{j,i})}{z_j+\rho_j\zeta_{j,i}}= Q[f_j](z_j+\rho_j\zeta_{j,i})-h(z_j+\rho_j\zeta_{j,i})=0$$ for $i=1,~2,~\ldots,~w.$ This is not possible since $Q[f_j]-h$ has at most $w-1$ distinct zeros in $\mD.$ This proves the claim. Now, from Lemma \ref{lem:nik}, it follows that $g$ is a rational function which  contradicts Lemma \ref{lem:nikhil}.

\section{Acknowledgment}
The authors express their gratitude towards the anonymous referee for valuable comments and suggestions that improved the quality and presentation of the paper.

\section{Statements and Declarations}
\begin{itemize}
\item[]{\bf Funding:} The authors declare that no funds, grants, or other support were received during the preparation of this manuscript.

\item[]{\bf Conflict of interests:} The authors declare that they have no conflict of interests. The authors have no relevant financial or non-financial interests to disclose.

\item[]{\bf Data availability:} Data sharing is not applicable to this article as no datasets were generated or analysed during the current study.
\end{itemize}

\bibliographystyle{amsplain}

\end{document}